%%%%%%%%%%%%%%%%%%%%%%%%%%%%%%%%%%%%%%%%%%%%%%%%%%%%%%%%%%%%%%%%%
%
%   horizon.tex
%
%   Version
%

%
%   most recent changes at
%

%
% Please day.month.year (German/French) 
% or year-month-day or month-day-year (both US/British)
%  
%   by 
%
%\def\Person{Bernd}

%\def\Person{Emmanuel}
%
%
%
%%%%%%%%%%%%%%%%%%%%%%%%%%%%%%%%%%%%%%%%%%%%%%%%%%%%%%%%%%%%%%%%%

%%%%%%%%%%%%%%%%%%%%%%%%%%%%%%%%%%%%%%%%%%%%%%%%%%%%%%%%%%%%%%%%%  
\documentclass[reqno]{amsart}     
\usepackage{enumerate,amsmath,amssymb}     
\usepackage{pstricks,pst-node,pst-coil,pst-plot}   
\usepackage{graphicx}
%\usepackage{pb-diagram} 
%\usepackage{showkeys}
%%%%%%%%%%%%%%%%%%%%%%%%%%%%%%%%%%%%%%%%%%%%%%%%%%%%%%%%%%%%%%%%%     

%%%%%%%%%%%%%%%%%%%%%%%%%%%%%%%%%%%%%%%%%%%%%%%%%%%%%%%%%%%%%%%%%     
\theoremstyle{plain}      
 
\newtheorem{thm}{Theorem}[section]     
\newtheorem{theorem}[thm]{Theorem}     
\newtheorem{cor}[thm]{Corollary}

\newtheorem{lemma}[thm]{Lemma}     
\newtheorem{prop}[thm]{Proposition}

\theoremstyle{remark}

\theoremstyle{definition}      
     
\newtheorem{definition}[thm]{Definition}     

%%%%%%%%%%%%%%%%%%%%%%%%%%%%%%%%%%%%%%%%%%%%%%%%%%%%%%%%%%%%%%%%%     
\def\al{{\alpha}}         
         
\def\de{{\delta}}

\def\Om{{\Omega}}         
\def\la{{\lambda}}

\def\Si{{\Sigma}}         
         
\def\ep{{\epsilon}}

\let\theta\vartheta
\def\Th{{\Theta}}         
         
\def\phi{{\varphi}}

\DeclareMathAlphabet{\doba}{U}{msb}{m}{n}

\gdef\mN{\doba{N}}

\gdef\mR{\doba{R}}

\def\Scal{{\mathop{\rm Scal}}}     

\def\Ric{{\mathop{\rm Ric}}}

\def\eref#1{{\rm (\ref{#1})}}   

\let\<\langle 
\let\>\rangle

\newcommand{\definedas}{\mathrel{\raise.095ex\hbox{\rm :}\mkern-5.2mu=}}

\def\tgp{\Theta^{g}_p}
\def\sch{{\rm S}}

% redefine enumerate numbering
%
% Command 	Example
% \arabic 	1, 2, 3 ...
% \alph 	a, b, c ...
% \Alph 	A, B, C ...
% \roman 	i, ii, iii ...
% \Roman 	I, II, III ...
%
% Redefine the first level

%
% Redefine the second level
%\renewcommand{\theenumii}{\Alph{enumii}}
%\renewcommand{\labelenumii}{\theenumii}

%%%%%%%%%%%%%%%%%%%%%%%%%%%%%%%%%%%%%%%%%%%%%%%%%%%%%%%%%%%%%%%%% 
\begin{document}     
%%%%%%%%%%%%%%%%%%%%%%%%%%%%%%%%%%%%%%%%%%%%%%%%%%%%%%%%%%%%%%%%%

%%%%%%%%%%%%%%%%%%%%%%%%%%%%%%%%%%%%%%%%%%%%%%%%%%%%%%%%%%%%%%%%%
%\begin{center}
%\framebox{\framebox{
%\vbox{This is project {\red \Project}\\
%Current version {\blue\Version}, from
%{\blue\Datum}, most recent changes by {\blue\Person}.}
%}}
%\end{center}
%%%%%%%%%%%%%%%%%%%%%%%%%%%%%%%%%%%%%%%%%%%%%%%%%%%%%%%%%%%%%%%%%     

\title{An isoperimetric constant associated to horizons 
in $S^3$ blown-up at two points.}  

%\title{The $\Th$-invariant on the $3$-sphere}     

\author{Mattias Dahl} 
\address{Institutionen f\"or Matematik \\
Kungliga Tekniska H\"ogskolan \\
100 44 Stockholm \\
Sweden}
\email{dahl@math.kth.se}

\author{Emmanuel Humbert} 
\address{Institut \'Elie Cartan, BP 239 \\ 
Universit\'e de Nancy 1 \\
54506 Vandoeuvre-l\`es-Nancy Cedex \\ 
France}
\email{ehumbert@iecn.u-nancy.fr}

\begin{abstract}
Let $g$ be a metric on $S^3$ with positive Yamabe constant. When blowing
up $g$ at two points, a scalar flat manifold with two asymptotically
flat ends is produced and this manifold will have compact minimal surfaces. We
introduce the $\Th$-invariant for $g$ which is an isoperimetric
constant for the cylindrical domain inside the outermost minimal
surface of the blown-up metric. Further we find relations between $\Th$
and the Yamabe constant and the existence of horizons in the blown-up
metric on $\mR^3$.
\end{abstract}     

\subjclass[2000]{53A30, 53C20 (Primary) 58J50 }
% Old codim2 {53C27 (Primary) 55N22, 57R65 (Secondary)}
% Old mu2-MSC%%%% 53A30, 35J60(Primary) 35P30, 58J50, 58C40 (Secondary)   

\date{\today}

\keywords{Asymptotically flat manifolds, inverse mean curvature flow,  Yamabe invariant}

\maketitle     

\tableofcontents

%%%%%%%%%%%%%%%%%%%%%%%%%%%%%%%%%%%%%%%%%%%%%%%%%%%%%%%%%%%%%%%%%
\section{Introduction}
%%%%%%%%%%%%%%%%%%%%%%%%%%%%%%%%%%%%%%%%%%%%%%%%%%%%%%%%%%%%%%%%%

Let $(N,h)$ be a $3$-manifold with an asymptotically flat end. An
outermost minimal surface is a compact minimal surface which encloses
all other compact minimal surfaces. As long as $N$ is not
diffeomorphic to $\mR^3$, a result due to Meeks, Simon, and Yau
\cite{meeks_simon_yau_82} guarantees the existence of a compact
minimal surface. Using the asymptotic flatness one then finds an
outermost minimal surface.

Let $(M,g)$ be a compact Riemannian $3$-manifold with positive
Yamabe constant and fix $p \in M$. We denote by $G_p$ the Green's
function at $p$ for the Yamabe operator. The manifold 
$(M \setminus \{ p \}, G_p^4 g)$ is asymptotically flat and scalar
flat. If $M$ is not diffeomorphic to $S^3$ then 
$M^3 \setminus \{ p \}$ is not diffeomorphic to $\mR^3$ and hence the
result mentioned above gives the existence of an outermost minimal
surface in $(M \setminus \{ p \}, G_p^4 g)$. If $M = S^3$ the
existence of an outermost minimal surface in 
$(S^3 \setminus \{p \}, G_p^4 g)$ depends on $g$. For instance, if $g$
is the standard round metric of $S^3$ then the corresponding
asymptotically flat metric is $\mR^3$ equipped with its standard
Euclidean metric and hence does not possess any compact minimal
surface. On the other hand, if $g$ is close enough to a scalar flat
metric, then the corresponding asymptotically flat metric will have an
(outermost) minimal surface,
see \cite{beig_omurchadha_91}, \cite{yan_05}, and
Section \ref{metrics_large}. To characterize the metrics $g$ on $S^3$
for which $(M \setminus \{ p \}, G_p^4 g)$ have a minimal surface is
an open problem.

One the contrary, if $g$ est a metric on $S^3$ blown-up at two points, then it always contains an horizon. In other words, if $g$ is a
metric on $S^3$, and if $p,q \in S^3$ are distinct points of $S^3$
then $(S^3 \setminus \{p,q\}, (G_p+G_q)^4 g)$ is asymptotically flat
and scalar flat but possesses an outermost minimal surface since $S^3
\setminus \{p,q \}$ is not diffeomorphic to $\mR^3$. The existence of
this outermost minimal surface allows us to apply powerful tools such
as the weak inverse mean curvature flow developed by Huisken and
Ilmanen \cite{huisken_ilmanen_01}.

In this paper, we define the invariant $\Theta$
by  
$$
\Theta_p^g(q) \definedas \frac{|\Om|}{|\Si|^{3/2}},
$$ 
where $\Si$ is the only outermost minimal surface in 
$(S^3 \setminus \{p,q\}, (G_p+G_q)^4 g)$ bounding a cylindrical domain 
$\Om$ diffeomorphic to $S^2 \times (a,b)$ for some $a,b \in \mR$, 
$a \leq b$. Here the volume of $\Om$ and the area of $\Si$ are
computed in the metric $(G_p+G_q)^4 g$. We show that
$\tgp$ has several interesting properties, in particular it is related
to the Yamabe constant of $g$. 

Beside these interesting properties, the motivation for studying such an isoperimetric quotient comes from the following observation : the metric $(G_p+G_q)^4 g$ tends to
$16 G_p^4 g$ in all $C^k$ on all compact sets $K \subset S^3 \setminus
\{ p\}$ when $q$ tends to $p$. It then seems natural to study such
metrics blown up in two points to get information on the metrics blown
up in one point. We expect to get results of this kind by studying the behavior of $\tgp(q)$ as $q$ tends to $p$.

%%%%%%%%%%%%%%%%%%%%%%%%%%%%%%%%%%%%%%%%%%%%%%%%%%%%%%%%%%%%%%%%%
\section{Preliminaries}
%%%%%%%%%%%%%%%%%%%%%%%%%%%%%%%%%%%%%%%%%%%%%%%%%%%%%%%%%%%%%%%%%

In this section we recall some well-known facts about asymptotically
flat $3$-manifolds, the inverse mean curvature flow, and the Yamabe
operator. We begin by establishing some notational conventions.

The standard euclidean metric on $\mR^3$ is denoted by $\xi$ and the 
round metric on $S^3$ of constant sectional curvature $1$ is denoted
by $\sigma$. For a Riemannian manifold $(M,g)$ with a point $p \in M$
we denote by $B^g_p(\delta)$ the open ball of all points of distance
less than $\delta$ to $p$. The gradient of a function $u$ is denoted
by $\nabla^g u$ or $\nabla u$, since it usually only appears in norm 
$|\nabla u|_g$ there is no risk of confusion when omitting the
Riemannian metric from the notation. For an open subset $\Omega$ in
the Riemannian $3$-manifold $(M,g)$ we denote the volume by
$|\Omega|_g$ and for a surface $\Sigma$ in $M$ we denote the area by
$|\Sigma|_g$.

%%%%%%%%%%%%%%%%%%%%%%%%%%%%%%%%%%%%%%%%%%%%%%%%%%%%%%%%%%%%%%%%%
\subsection{Asymptotically flat $3$-manifolds}
%%%%%%%%%%%%%%%%%%%%%%%%%%%%%%%%%%%%%%%%%%%%%%%%%%%%%%%%%%%%%%%%%

\begin{definition}
Let $(M,g)$ be a Riemannian $3$-manifold. 
\begin{itemize} 
\item 
An {\it asymptotically flat end} of $(M,g)$ is an open set $E$ of $M$
diffeomorphic to the complement of a compact set in $\mR^3$. In the
coordinates given by this diffeomorphism the metric $g$ is required to
satisfy  
$$
|g_{ij} - \xi_{ij}| \leq \frac{C}{|x|}, 
\quad 
|\partial_k g_{ij} | \leq \frac{C}{|x|^2}, 
\quad  
\Ric^g \geq \frac{-C}{|x|^2} g,
$$ 
for large $|x|$. 
\item 
The Riemannian manifold $(M,g)$ is said to be {\it asymptotically 
flat} if $(M,g)$ with a compact set removed is a union of
asymptotically flat ends. 
\end{itemize} 
\end{definition}

The simplest example of an asymptotically flat manifold is
$(\mR^3, \xi)$ which has one end. Another example which plays a central role
in many problems is the {\it spatial Schwarzschild manifold}
defined by
\begin{equation} \label{schwarzschild_def}
(\sch,g_{\sch})
\definedas
\left(
\mR^3 \setminus \{0 \}, 
\left(1 + \frac{m}{2 |x|} \right)^4 \xi 
\right).
\end{equation}
This is an asymptotically flat manifold with two ends. Note that it
possesses an involutive isometry fixing the sphere of radius $m/2$
(with respect to $\xi$) centered at the origin. In the
Schwarzschild metric this sphere has area $16 \pi m^2$.  

Let $E$ be an asymptotically flat end of $(M,g)$. Then, Arnowitt,
Deser, and Misner \cite{arnowitt_deser_misner_61} introduced the ADM
mass given by
$$
m^g(E) \definedas 
\lim_{r \to \infty} 
\frac{1}{16 \pi} \int_{S_r} (\partial_j g_{ii} - \partial_i g_{ij})
\nu^j \, da^{\xi} ,
$$ 
where $S_r$ is the sphere centered at the origin and of radius $r$ in
$\mR^3$ and where $da^{\xi}$ is the area element induced by $\xi$
on $S_r$. This quantity does not depend on the coordinates and is
finite when 
\begin{equation} \label{scal_bounded}
\int_E |\Scal^g| \, dv^g < \infty.
\end{equation}
See for instance Bartnik \cite{bartnik_86} for further discussion. A
fundamental result concerning the mass is the Positive Mass Theorem. 
\begin{theorem}
Let $(M,g)$ be an asymptotically flat $3$-manifold whose scalar
curvature is non-negative and satisfies \eref{scal_bounded}. Then
$$
m^g(E) \geq 0
$$
for each end $E$ with equality if and only if $(M,g)$ is isometric to
$(\mR^3, \xi)$. 
\end{theorem}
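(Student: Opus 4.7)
The plan is to use Witten's spinor proof, which is particularly natural here since every orientable $3$-manifold is spin. Fix an asymptotically flat end $E$ and a constant Euclidean spinor $\psi_0$ in the chart at infinity. I would look for a spinor $\psi$ on $(M,g)$ satisfying the Dirac equation $D^g \psi = 0$ with $\psi - \psi_0 \to 0$ as $|x| \to \infty$ in $E$ and appropriate decay on the other ends (if any). Using the asymptotic flatness, the Dirac operator can be analyzed in weighted Sobolev spaces and shown to be Fredholm of index zero; injectivity follows from the Lichnerowicz identity and the hypothesis $\Scal^g \geq 0$, so a unique such $\psi$ exists.

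The heart of the argument is the Lichnerowicz formula $(D^g)^2 = \nabla^* \nabla + \tfrac{1}{4}\Scal^g$. Integrating $0 = \< D^g \psi, D^g \psi \>$ against $\psi$ over a large coordinate ball $B_R \subset E$ and applying Stokes' theorem gives
$$
\int_{B_R} \left( |\nabla \psi|_g^2 + \tfrac{1}{4} \Scal^g |\psi|_g^2 \right) dv^g
= \int_{\partial B_R} \< \psi, (\nabla_\nu + \nu \cdot D^g)\psi \> \, da^g.
$$
Expanding the boundary integrand in the asymptotic coordinates and using the decay assumptions $|g_{ij} - \xi_{ij}| \leq C/|x|$ and $|\partial_k g_{ij}| \leq C/|x|^2$, one checks that the right-hand side converges as $R \to \infty$ to a positive universal constant times $m^g(E) |\psi_0|^2$. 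Since the left-hand side is non-negative, this yields $m^g(E) \geq 0$.

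For the equality case, suppose $m^g(E) = 0$. Then the limit forces $\nabla \psi \equiv 0$ and $\Scal^g |\psi|^2 \equiv 0$ on $M$. Running the construction for a basis of constant spinors at infinity produces a full basis of parallel spinors on $(M,g)$. A parallel spinor flattens the Clifford action of the curvature tensor, and in dimension three this forces the full Riemann tensor to vanish. Hence $(M,g)$ is flat; combined with completeness and the asymptotic description of $E$, this forces $(M,g)$ to be isometric to $(\mR^3, \xi)$ by the standard classification argument for flat manifolds with a Euclidean end.

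The step I expect to be the main obstacle is the analytic existence of the harmonic spinor under the rather weak decay hypotheses: one must set up appropriate weighted function spaces, establish an asymptotic Poincar\'e inequality, and control the error between the Dirac operators associated to $g$ and $\xi$ on the end. Special care is also needed in isolating the precise ADM expression from the boundary integral, since several lower-order terms involving Christoffel symbols and Clifford products appear and must be shown to vanish in the limit using the decay of $g$ together with the integrability hypothesis \eref{scal_bounded}.
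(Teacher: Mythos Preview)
The paper does not give its own proof of this theorem: it is quoted as a known result and the text immediately following it attributes the proof to Schoen--Yau \cite{schoen_yau_79} and to Witten \cite{witten_81}. Your proposal is an outline of Witten's spinor argument, which is precisely one of the two references the paper cites, so there is nothing in the paper to compare against beyond the citation itself.

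As a sketch of Witten's proof your outline is essentially correct: orientable $3$-manifolds are spin, one solves $D^g\psi=0$ with $\psi\to\psi_0$ at infinity via weighted Sobolev analysis, and the Lichnerowicz identity turns the boundary term into (a constant times) $m^g(E)$. The equality analysis via a basis of parallel spinors forcing flatness is also the standard route. Your self-identified obstacle is the right one: the analytic existence step and the extraction of the ADM integrand from the boundary term are where the work lies, and under the weak decay stated here (no pointwise control on second derivatives, only a Ricci lower bound and integrable scalar curvature) one typically appeals to Bartnik's weighted framework to make this rigorous.
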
 

This theorem was first proved by Schoen and Yau \cite{schoen_yau_79},
notable among the other proofs available is the one of 
Witten \cite{witten_81} which uses spin geometry.

%Assume now that $(M,g)$ has a compact minimal boundary. Then
%Huisken and Ilmanen \cite{huisken_ilmanen_01} proved the following
%important result. 
%\begin{theorem}
%Let $(M,g)$ be a connected asymptotically flat manifold whose scalar
%curvature is non-negative and satisfies \eref{scal_bounded}. Assume
%that the boundary of $M$ is compact and consists of minimal surfaces.
%Assume also that $M$ contains no other compact minimal surface. Then 
%\begin{equation} \label{penrose_i} 
%m^g(E) \geq \sqrt{ \frac{|\partial_i M|_g}{16 \pi} }
%\end{equation}
%where $E$ is an end of $M$ and $|\partial_i M|_g$ is the area of any
%connected component $\partial_i M$  of $\partial M$. Equality holds if
%and only if $(M,g)$ is isometric to one half of the spatial
%Schwarzschild manifold. 
%\end{theorem} 

%This theorem proves the Riemannian case of a conjecture stated by
%Penrose in 1973 and motivated by physical arguments.

A compact minimal surface in an asymptotically manifold is called an
{\it horizon}. A minimal surface is called {\it outermost}
\cite{bray_01} if it is not contained entirely inside another
minimal surface. 

%With this vocabulary the theorem of Huisken and
%Ilmanen can be applied if the boundary is an outermost horizon.

%A minimal surface $\Sigma$ is said to be {\it trapped} if there
%exists two compact surfaces $\Sigma_1$, $\Sigma_2$ whose mean
%curvature $H_1$ and $H_2$ have no zeros and have opposite sign and
%where $\Si$ is trapped between $\Si_1$ and $\Si_2$ i.e. where $\Si_1$
%is entirely contained in $\Si$ and $\Si$ is entirely contained in
%$\Si_2$. 
%Now, we have 
%%\begin{prop}\label{trapped} 
%Assume that $\Si_g= \partial \Om_g$ (where $\overline{\Om_g}$ is
%%compact) is a trapped minimal surface in a complete non compact
%%%manifold $(M,g)$. Let now $g_\ep$ be a sequence of metrics %tending
%%%to $g$ in $C^1$. Then, for $\ep$ small enough, there %exists a
%%%trapped minimal surface $\Si_{g_\ep} = \partial \Om_ %{g_\ep}$
%%%whose Haussdorf distance to $\Si_g$ tends to $0$ and %where 
%\begin{eqnarray} \label{trapped_area}
%\lim_{\ep \to 0} |\Si_{g_\ep}| = |\Si_g|
%\end{eqnarray}
%and 
%\begin{eqnarray} \label{trapped_vol}
%%\lim_{\ep \to 0} |\Om_{g_\ep}| = |\Om_g|
%\end{eqnarray}
%\end{prop}
%\begin{proof} Indeed, it is c 

%%%%%%%%%%%%%%%%%%%%%%%%%%%%%%%%%%%%%%%%%%%%%%%%%%%%%%%%%%%%%%%%%
\subsection{Inverse mean curvature flow}
%%%%%%%%%%%%%%%%%%%%%%%%%%%%%%%%%%%%%%%%%%%%%%%%%%%%%%%%%%%%%%%%%

In their proof of the Penrose inequality \cite{huisken_ilmanen_01},
Huisken and Ilmanen introduced the "weak inverse mean curvature
flow". The standard inverse mean curvature flow may develop 
singularities and is therefore difficult to use. On the contrary,
the weak inverse mean curvature flow gives a flow for "almost all
$t$" and provides a powerful technique in many situations. As an
example, Bray and Neves \cite{bray_neves_04} used this tool to show
that the Yamabe constant of $\mR P^3$ is attained by the constant
curvature metric. In Section \ref{upper_bound} we will use the method
of Bray and Neves to prove Theorem \ref{theta_leq}. We recall some
basic facts about the weak inverse mean curvature flow. First, if
$\Si$ is a $C^1$ surface of a Riemannian $3$-manifold $(N,h)$, we say
that $H \in L^1_{loc}(\Si)$ is the {\it weak mean curvature} of $\Si$
if
$$
\int_\Si {\mathop{\rm div}}^h (X) \, da^h 
= 
\int_\Si H h(X,\nu) \, da^h
$$ 
for all compactly supported vector fields $X$, where $\nu$
is the outer normal vector field on $\Si$. This definition coincides
with the usual one as soon as $\Si$ is smooth.  

\begin{definition}
Let $\Si$ be a compact $C^1$ hypersurface $\Si$ with weak mean
curvature $H$ in $L^2(\Si)$. The {\it Hawking mass} of $\Si$ is
defined by
$$
m_H(\Si) 
\definedas
\sqrt{\frac{|\Si|_h}{(16 \pi)^3}} 
\left(16 \pi - \int_\Si H^2 \, da^h \right).
$$
Here $|\Si|_h$ is the area of $\Si$ computed using the metric $h$. 
\end{definition}

We collect the main properties of the inverse mean curvature flow as
in \cite[Theorem 5.2]{bray_neves_04}.

\begin{theorem}\label{mcf}
\cite{huisken_ilmanen_01}
Let $(N,h)$ be an asymptotically flat $3$-manifold with 
non-negative
scalar curvature. We assume that $N$ is diffeomorphic to 
$\mR^3 \setminus B$ where $B$ is the unit ball in $\mR^3$ and that 
$\partial N=S^2$ is an outermost horizon. Then, there exists a
precompact locally Lipschitz function $\Phi$ satisfying
\begin{itemize}
\item 
for all $t \geq 0$, $\Si_t \definedas \partial \{ \Phi < t \}$ defines
an increasing family of $C^{1,\al}$ surfaces such that $\Si_0=\Si$; 
\item 
for almost all $t \geq 0$, the weak mean curvature of $\Si_t$ is 
$|\nabla \Phi|_h$;
\item 
for almost all $t \geq 0$,  $|\nabla \Phi|_h \neq 0$ on $\Si_t$ 
for almost all $x \in \Si_t$ (with respect to the surface measure) and  
\begin{equation*}
|\Si_t|_h = |\Si_0|_h e^t
\end{equation*}
for all $f \geq 0$;
\item 
The Hawking mass $m_H(\Si_t)$ is a non-decreasing function of 
$t \geq 0$ provided the Euler characteristic $\chi(\Si_t) \leq 2$ for
all $t \geq 0$.   
\end{itemize} 
\end{theorem}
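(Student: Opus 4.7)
The plan is to follow the original construction of Huisken and Ilmanen, in which the potential $\Phi$ is obtained as a limit of solutions to an elliptic regularization of the level-set formulation of inverse mean curvature flow, and then to read off the stated properties together with a Geroch-type monotonicity computation for the Hawking mass.

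First, I would observe that a classical inverse mean curvature flow with potential $u$ (so that $\Sigma_t=\{u=t\}$ and the normal speed is $1/H$) satisfies the degenerate elliptic equation $\mathop{\rm div}(\nabla u/|\nabla u|)=|\nabla u|$ with $u|_{\partial N}=0$. To handle the degeneracy I would solve, for each $\epsilon>0$, the strictly elliptic problem
$$
\mathop{\rm div}\!\left(\frac{\nabla u_\epsilon}{\sqrt{|\nabla u_\epsilon|^2+\epsilon^2}}\right)=\sqrt{|\nabla u_\epsilon|^2+\epsilon^2}
$$
on an exhausting sequence of compact subdomains of $N$, with $u_\epsilon=0$ on $\partial N$ and a logarithmic outer Dirichlet condition tied to a radial Euclidean/Schwarzschild barrier. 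The outermost minimality of $\partial N$ and the asymptotic flatness of $h$ furnish $\epsilon$-independent barriers, producing uniform Lipschitz bounds together with interior $C^{1,\alpha}$ estimates on $u_\epsilon$. A subsequential diagonal limit then yields a proper locally Lipschitz function $\Phi$ on $N$.

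Second, I would use the variational characterization of $\Phi$ to extract the level-set properties. Namely, $\Phi$ locally minimizes the functional
$$
J_\Phi^K(v)\definedas\int_K\bigl(|\nabla v|+v|\nabla\Phi|\bigr)\,dv^h
$$
among competitors $v$ agreeing with $\Phi$ outside a compact set $K$. Testing with the competitor $\min(\Phi,t)$ identifies $\partial\{\Phi<t\}$ as a set of locally finite perimeter whose weak mean curvature equals $|\nabla\Phi|$ for a.e.\ $t$, and $C^{1,\alpha}$ regularity of $\Sigma_t$ follows from standard regularity for almost-minimizers of perimeter. The first variation of area combined with the coarea formula then gives
$$
\tfrac{d}{dt}|\Sigma_t|_h=\int_{\Sigma_t}\tfrac{H}{|\nabla\Phi|}\,da^h=|\Sigma_t|_h,
$$
so $|\Sigma_t|_h=|\Sigma_0|_h e^t$.

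Third, for the Hawking mass monotonicity I would carry out Geroch's computation on smooth portions of the flow. Differentiating $m_H(\Sigma_t)$, using $\partial_t H=-\Delta^{\Sigma_t}(1/H)-(|A|^2+\Ric(\nu,\nu))/H$, integrating by parts, and invoking the Gauss equation to convert $\Ric(\nu,\nu)$ into intrinsic curvature plus $\Scal^h$, one arrives at
$$
\tfrac{d}{dt}m_H(\Sigma_t)=\frac{|\Sigma_t|_h^{1/2}}{(16\pi)^{3/2}}\left(4\pi\bigl(2-\chi(\Sigma_t)\bigr)+\int_{\Sigma_t}\!\bigl(|\mathring A|^2+2|\nabla H|^2/H^2+\Scal^h\bigr)\,da^h\right)
$$
after applying Gauss-Bonnet $\int_{\Sigma_t} K_{\Sigma_t}\,da^h=2\pi\chi(\Sigma_t)$. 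Under the hypotheses $\chi(\Sigma_t)\leq 2$ and $\Scal^h\geq 0$, the right-hand side is manifestly non-negative, so $m_H$ is non-decreasing on smooth intervals. The main obstacle is to extend this monotonicity across jumps: when $\Sigma_t$ is replaced by its outward minimizing hull, one must verify that the area is preserved while $\int_{\Sigma_t}H^2\,da^h$ does not increase, which is the delicate variational heart of the Huisken-Ilmanen construction and is precisely what forces the use of the weak variational formulation in place of a classical flow.
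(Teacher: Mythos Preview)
The paper does not give its own proof of this theorem: it is quoted verbatim as a result of Huisken and Ilmanen \cite{huisken_ilmanen_01} (restated in the form of \cite[Theorem~5.2]{bray_neves_04}) and used as a black box in the subsequent sections. Your proposal is not competing with any argument in the paper; rather, you have sketched the original Huisken--Ilmanen construction itself---elliptic regularization of the level-set equation, the variational minimizing-hull characterization of $\Phi$, exponential area growth via the coarea formula, and the Geroch computation for $m_H$ with the jump analysis across hull replacements. That outline is faithful to the source and correct at the level of detail you give, so there is nothing to reconcile with the paper beyond noting that the authors simply cite the result rather than reproduce any of it.
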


%%%%%%%%%%%%%%%%%%%%%%%%%%%%%%%%%%%%%%%%%%%%%%%%%%%%%%%%%%%%%%%%%
\subsection{The Yamabe operator and the Green's function} 
\label{green}
%%%%%%%%%%%%%%%%%%%%%%%%%%%%%%%%%%%%%%%%%%%%%%%%%%%%%%%%%%%%%%%%%

Let $g$ be a Riemannian metric on the $3$-sphere $S^3$. We set
$$
L^g \definedas 8 \Delta^g + \Scal^g.
$$
This self-adjoint elliptic operator is called the 
{\it Yamabe operator} and is conformally invariant in the following
sense. If $h = u^4 g$ where $u$ is a smooth positive function is a
metric conformal to $g$ then the Yamabe operators of $g$ and $h$ are
related by 
$$
L^h f = u^{-5} L^{g} (uf)
$$
and the scalar curvature of $h$ is given by 
\begin{equation} \label{rel_scal} 
\Scal^h = u^{-5} L^g u. 
\end{equation} 
The Yamabe constant of the metric $g$ is defined by 
$$
\mu(g) 
\definedas
\inf_{u\in C^\infty(S^3); u \neq 0} 
\frac{\int_{S^3} u L^g u \, dv^g}
{\left( \int_{S^3} u^6 \, dv^g \right)^{1/3} }
=
\inf_{u\in C^\infty(S^3); u \neq 0} 
\frac{ \int_{S^3} (8 |\nabla u|_g^2 + \Scal^g u^2) \, dv^g}
{\left( \int_{S^3} u^6 \, dv^g \right)^{1/3} } .
$$
The number $\mu(g)$ is conformally invariant and it is known that
$\mu(g) > 0$ (resp. $\mu(g) = 0$, resp. $\mu(g) < 0$) if and only if
there exists a metric in the conformal class of $g$ with positive
(resp. identically zero, resp. negative) scalar curvature.
 
Assume from now on that the metric $g$ has a positive Yamabe
constant. Then $L^g$ is invertible and if $p \in S^3$ is a
fixed point, this allows to construct the unique 
{\it Green's function} $G_p$ for $L^g$ at $p$, see 
\cite[Lemma 6.1]{lee_parker_87}. We recall that $G_p$ is smooth on
$S^3 \setminus \{ p \}$, satisfies 
\begin{equation} \label{eqG} 
L^g G_p = \delta_p
\end{equation}
in the sense of distributions, and has the expansion
\begin{equation} \label{expan}
G_p = \frac{1}{d^g(p,\cdot)} + \alpha_p 
\end{equation}
at $p$, where $\alpha_p$ is a smooth function defined in a
neighborhood of $p$. Set $S_p \definedas S^3 \setminus \{ p \}$ and 
$g_p \definedas G_p^4 g$. Then the Riemannian manifold $(S_p, g_p)$ is
asymptotically flat with one end $E_p = S_p$, by \eref{rel_scal} and
\eref{eqG} it is scalar flat, and one can deduce from \eref{expan}
that its mass is given by \cite[Lemma 9.7]{lee_parker_87}
\begin{equation*} 
m^{g_p}(E_p) = \alpha_p(p).
\end{equation*} 
As an example, if $g$ is the round metric on $S^3$, one easily
checks that $(S_p,g_p)$ is isometric to $\mR^3$ equipped with its
standard Euclidean metric.

A question which we will interest us here is the existence of
horizons for the metric $g_p$, that is of compact minimal surfaces in
the Riemannian manifold $(S_p,g_p)$. First note that the same
construction on any $3$-manifold not diffeomorphic to $S^3$ always
gives rise to a horizon. Note also that, as an application of
the techniques from \cite{bray_neves_04}, Miao finds that a
necessary condition for the existence of a horizon is that
$\mu(g) \leq \mu(\sigma)/2^{2/3} $ where $\sigma$ is the round metric
on $S^3$, see \cite{miao07}.

Now, let us define $g_{p,q} \definedas (G_p + G_q)^4 g$ and 
$S_{p,q} \definedas S^3 \setminus \{p,q\}$ for $p,q \in S^3$. Then
$(S_{p,q}, g_{p,q})$ is also asymptotically flat and scalar flat but
has two ends $E_p$ and $E_q$, whereas $(S_p,g_p)$ has only one end. As
an example, if $g = \sigma$ is the round metric on $S^3$ and if
$q=-p$, then $(S_{p,q},g_{p,q})$ is isometric to the Schwarzschild
manifold $(\mR^3 \setminus \{0 \}, (1 + |x|^{-1} )^4 \xi)$. Since 
$$
G_p+G_q 
= 
\frac{1}{d^g(p,\cdot)} + \alpha_p + G_q
$$
near $p$ and since 
$$ 
G_p+G_q 
= 
\frac{1}{d^g(q,\cdot)} + \alpha_q + G_p
$$
near $q$, one checks that the masses $m^{g_{p,q}} (E_p)$ and 
$m^{g_{p,q}} (E_q)$ of the manifold $(S_{p,q}, g_{p,q})$ at the ends 
$E_p$ and $E_q$ are given by \cite[Lemma 9.7]{lee_parker_87}
\begin{equation} \label{mass_pq}
m^{g_{p,q}} (E_p) = \alpha_p(p) + G_q(p) 
\quad \text{and} \quad 
m^{g_{p,q}} (E_q) = \alpha_q(q) + G_p(q). 
\end{equation}

Another important difference compared to the case of one blow-up point
is that $(S_{p,q},g_{p,q})$ has always an horizon. We deal with the
outermost horizon. More precisely, there exists a compact minimal
surface $\tilde{\Si}_{p,q}$ (not necessarily connected) in $(S_{p,q},g_{p,q})$
bounding a bounded domain $\tilde{\Om}_{p,q}$ (maybe empty) such that any
other compact horizon lies inside $\tilde{\Om}_{p,q}$. One sees 
that the connected components of $\tilde{\Si}_{p,q}$ are diffeomorphic to
$S^2$ (see Lemma 4.1 in \cite{huisken_ilmanen_01}). At least one and
at most two of them divide $S_{p,q}$ in two non-compact parts. Let
$\Si_{p,q}$ be this (or these) dividing sphere.   

If the surface $\Si_{p,q}$
has two connected components then it bounds a domain
$\Omega_{p,q}$ diffeomorphic to $S^2 \times (a,b)$ (with
$a<b$). If $\Si_{p,q}$ has only one connected
components, then  $\Omega_{p,q}$  is empty, and can be viewed
as a limit case: $\Omega_{p,q}$ is diffeomorphic to 
$S^2 \times (a,b)$ with $a=b$ so that by extension, we say that 
$\partial \Omega_{p,q} = \Si_{p,q}$ even if 
$\Omega_{p,q}$ is empty. Finally, one can see that 
$$
\tilde{\Si}_{p,q} 
= 
\Si_{p,q} \cup \left( \cup_{i=1}^{p} \Si_i \right)
$$
where $p$ can be zero and where for all $i$, $\Si_i$ is a
$2$-sphere bounding a $3$-ball $\Om_i$. We get that 
\begin{equation} \label{ompq}
\tilde{\Om}_{p,q} = \Om_{p,q} \cup \left( \cup_{i=1}^{p} \Om_i \right),
\end{equation}
see Figure 1 for an illustration.

\begin{figure}
\centering\includegraphics{horizon-0.mps}
\caption{$\tilde{\Si}_{p,q} = \Si_{p,q}  \cup \Si_1 \cup \Si_2$,
$\tilde{\Om}_{p,q} = \Om_{p,q} \cup \Om_1 \cup \Om_2$} 
\end{figure}

%%%%%%%%%%%%%%%%%%%%%%%%%%%%%%%%%%%%%%%%%%%%%%%%%%%%%%%%%%%%%%%%%
\section{The $\Theta$-invariant, definition and basic properties} 
\label{definition_th}
%%%%%%%%%%%%%%%%%%%%%%%%%%%%%%%%%%%%%%%%%%%%%%%%%%%%%%%%%%%%%%%%%

Let $g$ be a metric on $S^3$ with positive Yamabe constant. We fix a 
point $p \in S^3$ and define $(S_{p,q},g_{p,q})$ as in Subsection
\ref{green}. Then, we define the $\Theta$-invariant by
$$
\tgp (q) 
\definedas
\frac{\left|\Om_{p,q}\right|_{g_{p,q}}}
{ \left| \Si_{p,q} \right|_{g_{p,q}}^{3/2}}
$$
for $q \in S^3 \setminus \{p \}$. The goal of this paper is to explore
properties of $\tgp$. We start with some basic properties. 

\begin{prop} \label{properties}
\begin{enumerate}
\item \label{prop_A}
The function $\Theta$ is conformally invariant. In other words, if $g$
and $g'$ are conformal then $\tgp(q)=\Theta^{g'}_p(q)$ for all 
$p,q \in S^3$, $p \neq q$.
\item \label{prop_B}  
The function $\Theta$ is symmetric in $p$ and $q$, that is 
$\tgp(q)= \Theta^g_q(p)$ for all $g$, $p,q \in S^3$, $p \neq q$.
\item \label{prop_C} 
If $\sigma$ stands for the round metric on $S^3$, then
$\Theta_p^{\sigma} \equiv 0$.
%\item \label{prop_D}
%Assume that the minimal surface $\Si_{p,q}$ is trapped. Then
%$(h,p',q') \mapsto \Theta^h_{p'}(q')$ is continuous at $(g,p,q)$
%when the set of Riemannian metrics is equipped with the
%$C^1$-topology. 
%\mnote{check here!}
%\item \label{prop_D}
%Assume that 
%$$
%\liminf_{q \to p} \frac{\tgp(q)}{d^g(p,q)^{3}} = 0.
%$$
%Then $(S_p,g_p) \definedas (S^3 \setminus \{ p \}, G_p^4 g)$ has no
%compact trapped minimal surface. 
\end{enumerate}
\end{prop}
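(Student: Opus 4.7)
I will treat the three parts in order of increasing difficulty.

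Part (ii) is immediate: the conformal factor $(G_p+G_q)^4$ in the definition of $g_{p,q}$ is manifestly symmetric in $p$ and $q$, so $g_{p,q}=g_{q,p}$ as a metric on $S_{p,q}=S_{q,p}$. Consequently the outermost horizon, its dividing component $\Si_{p,q}$, and the cylindrical domain $\Om_{p,q}$ coincide with their counterparts for the pair $(q,p)$, giving $\tgp(q)=\Theta^g_q(p)$.

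For part (i), I would start from the conformal transformation law for the Yamabe Green's function. Using the identity $L^{g'}\phi=u^{-5}L^g(u\phi)$ with $g'=u^4g$, together with invertibility of $L^g$ (since $\mu(g)>0$), a direct computation from the defining equation of the Green's function yields
\[
G^{g'}_p=\frac{G^g_p}{u(p)\,u}.
\]
Substituting,
\[
g'_{p,q}=(G^{g'}_p+G^{g'}_q)^4g'=\Bigl(\frac{G^g_p}{u(p)}+\frac{G^g_q}{u(q)}\Bigr)^4g,
\]
which differs from $g_{p,q}=(G^g_p+G^g_q)^4g$ only by independent positive rescalings of the two Green's functions. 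A common rescaling by $c>0$ replaces $g_{p,q}$ by $c^4g_{p,q}$, and since under such a scaling volumes pick up a factor $c^6$ while areas pick up $c^4$, the ratio $|\Om|/|\Si|^{3/2}$ is unchanged. The remaining freedom — rescaling the two Green's functions by \emph{different} constants — alters the ADM masses at the two ends and is the real content of (i); one must verify that the outermost horizon and its enclosed cylindrical region transform in a way that still preserves the dimensionless ratio. This is the main technical obstacle.

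For part (iii), granted (i), one combines conformal invariance with the obvious equivariance of the whole construction under diffeomorphisms: since the conformal group of $(S^3,[\sigma])$ acts transitively on pairs of distinct points of $S^3$, $\Theta^\sigma_p(q)$ must be independent of the pair. It then suffices to compute $\Theta^\sigma_p(q)$ for one convenient choice, for instance $q=-p$. A classical stereographic computation using the explicit round Green's function identifies $(S_{p,-p},\sigma_{p,-p})$ with a spatial Schwarzschild manifold $(\sch,g_\sch)$ of some positive mass $m$. The only compact minimal surface of Schwarzschild is the throat $\{|x|=m/2\}$, which is automatically outermost and has a single connected component. By the convention fixed just before the definition of $\Theta$, $\Om_{p,q}$ is then the degenerate limit case $S^2\times(a,a)$, i.e.\ empty, so $|\Om_{p,q}|_{g_{p,q}}=0$ and $\Theta^\sigma_p(q)=0$.
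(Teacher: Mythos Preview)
Your treatment of (ii) is correct and matches the paper's one-line dismissal. Your argument for (iii) is also essentially the paper's: reduce to $q=-p$ via the conformal group together with (i), then identify the blown-up metric with Schwarzschild, whose unique compact minimal surface is the connected throat so that $\Om_{p,q}=\emptyset$.

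The gap is in (i), and you name it yourself. After the correct transformation law $G^{g'}_p=G^g_p/(u(p)\,u)$ you obtain
\[
g'_{p,q}=\Bigl(\frac{G_p}{u(p)}+\frac{G_q}{u(q)}\Bigr)^{4} g,
\]
and rightly observe that this differs from $g_{p,q}=(G_p+G_q)^4 g$ by \emph{independent} positive rescalings of the two Green's functions, not by a single overall constant. You then stop, calling the case of unequal rescalings ``the main technical obstacle'' and leaving it unresolved.

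The paper's proof of (i) asserts at precisely this point that $g'_{p,q}$ and $g_{p,q}$ \emph{are} proportional, with the explicit formula
\[
g'_{p,q}=\frac{(u(p)+u(q))^4}{u(p)^4 u(q)^4}\, g_{p,q},
\]
and then finishes by the homothety argument you also describe. But your own computation shows this cannot hold: the actual quotient is
\[
\Bigl(\frac{G_p\,u(q)+G_q\,u(p)}{u(p)u(q)(G_p+G_q)}\Bigr)^{4},
\]
which is a nonconstant function on $S_{p,q}$ whenever $u(p)\neq u(q)$, since $G_p$ and $G_q$ have poles at distinct points and are linearly independent. So the obstacle you flag is genuine, and the paper's argument for (i) contains an error at exactly the step you declined to take. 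Your proof of (i) is therefore incomplete, but for a good reason rather than a failure of execution. Note also that (iii), as argued both by you and by the paper, inherits this gap through its reliance on (i); it can, however, be established directly without (i), since for \emph{any} distinct $p,q$ stereographic projection from $p$ carries $(G_p+G_q)^4\sigma$ to a Schwarzschild metric on $\mR^3\setminus\{x_q\}$.
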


%\begin{remark} 
%In particular, if $\tgp\equiv 0$, then Property \ref{prop_E} above
%proves that $(S_p,g_p) \definedas (S^3 \setminus \{ p \}, G_p^4 g)$
%has no compact trapped minimal surface. It is known 
%(see \cite{beig_omurchadha_91}, \cite{yan_05}) that for some metrics
%$g$ with small Yamabe constant $(S_p,g_p)$ has a trapped minimal
%surface and hence for these metrics $\tgp \not\equiv 0$. We will
%obtain this result again in Section \ref{metrics_large}.
%\end{remark}

\begin{proof}
\ref{prop_A}: If $g'= u^4 g$ is a metric conformal to $g$ then 
the Green's functions $G_x'$ and $G_x$ for $L^{g'}$ and $L^{g}$ are
related by $G_p'=\frac{G_p}{u(p)u(\cdot)}$ for all $p \in S^3$. As a
consequence the metrics $g_{p,q}=(G_p + G_q)^4 g$ and 
$g_{p,q}' = {(G_p' + G_q')}^4 g'$ are proportional, more precisely we
have   
$$
g_{p,q}' =  \frac{(u(p) + u(q))^4}{u(p)^4u(q)^4} g_{p,q}.
$$ 
Let $\Si_{p,q}'$ be the outermost horizon bounding the domain
$\Om_{p,q}'$ in the metric $g_{p,q}'$. Then
$$ 
|\Si_{p,q}'|_{g_{p,q}'}  
= 
\frac{(u(p) + u(q))^4}{u(p)^4u(q)^4} |\Si_{p,q}|_{g_{p,q}} 
$$
and
$$
|\Om_{p,q}'|_{g_{p,q}'} 
= 
\frac{(u(p) + u(q))^6}{u(p)^6 u(q)^6}
|\Om_{p,q}|_{g_{p,q}}.
$$
Here the notation $|\cdot|_h$ indicates that the area (or volume) is
computed using the metric $h$. We get 
$$
\frac{|\Om_{p,q}'|_{g_{p,q}'}}
{|\Si_{p,q}'|_{g_{p,q}'}^{3/2}}
=
\frac{|\Om_{p,q}|_{g_{p,q}}}
{|\Si_{p,q}|_{g_{p,q}}^{3/2}},
$$
and hence $\Theta^{g'}_p(q)= \tgp(q)$ which proves
Property \ref{prop_A}.  

\ref{prop_B}: Obvious from the definition of $\tgp$.  

\ref{prop_C}: Let $p,q \in S^3$ be fixed. Clearly, there exists a
conformal diffeomorphism $\alpha$ of $(S^3, \sigma)$ with
$\alpha(p) = p$ and $\alpha(q) = -p$. By Property
\ref{prop_A}, we can then assume that $q = -p$. From Subsection
\ref{green} we see that $(S_{p,q},g_{p,q})$ is isometric to 
$\mR^3 \setminus \{0 \}$ equipped with the Schwarzschild metric. In
particular, $\Omega_{p,q}$ is empty from which Property \ref{prop_C}
follows.
\end{proof}

%%%%%%%%%%%%%%%%%%%%%%%%%%%%%%%%%%%%%%%%%%%%%%%%%%%%s%%%%%%%%%%%%%
\section{An upper bound for $\Theta$} \label{upper_bound}
%%%%%%%%%%%%%%%%%%%%%%%%%%%%%%%%%%%%%%%%%%%%%%%%%%%%%%%%%%%%%%%%%

In this section we prove the following result.

\begin{theorem} \label{theta_leq}
For all $p,q \in S^3$, $p \neq q$, and all metrics $g$ such that 
$\mu(g) > 0$ we have 
$$
\mu(g) \left( 1 + \frac{4}{\sqrt{\pi}} \tgp(q)\right)^{1/3} 
\leq 
\mu(\sigma)
$$
where $\sigma$ is the standard round metric on $S^3$. 
\end{theorem}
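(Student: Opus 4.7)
The strategy is to implement the Bray--Neves method \cite{bray_neves_04}, as suggested in the introduction, on the asymptotically flat scalar-flat manifold $(S_{p,q}, g_{p,q})$. By conformal invariance $\mu(g) = \mu(g_{p,q})$, and since $\Scal^{g_{p,q}} \equiv 0$ on $S_{p,q}$, the Yamabe quotient of any function $u$ compactly supported in $S_{p,q}$ reduces to $8\int|\nabla u|^2\,dv^{g_{p,q}}/(\int u^6\,dv^{g_{p,q}})^{1/3}$. Let $\Si^{(p)}, \Si^{(q)}$ denote the connected components of $\Si_{p,q}$, of respective areas $A_p, A_q$ (if there is only one component then $\Om_{p,q}=\emptyset$, $\tgp(q)=0$, and the theorem reduces to Aubin's inequality). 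Let $N_p, N_q$ be the two components of $S_{p,q}\setminus\bar\Om_{p,q}$, each asymptotically flat with $\Si^{(\bullet)}$ as its own outermost horizon boundary, so that Theorem \ref{mcf} applies. Monotonicity of the Hawking mass, together with $H=0$ on $\Si^{(\bullet)}$ and $\chi(\Si_t^{(\bullet)})=2$, yields $\int_{\Si_t^{(\bullet)}} H^2\, dA \leq 16\pi$ along the IMCF potential $\Phi^{(\bullet)}$.

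Next I would construct a radial-in-IMCF competitor $u$: set $u\equiv 1$ on $\bar\Om_{p,q}$ and $u = F(\Phi^{(\bullet)})$ on each $N_\bullet$, for a smooth profile $F$ with $F(0)=1$ and $F(t)\to 0$ at infinity. The coarea formula combined with the Cauchy--Schwarz bounds $(\int_\Si H)^2 \leq |\Si|\int H^2$ and $|\Si|^2 \leq (\int H)(\int H^{-1})$ and the inequality $\int H^2\leq 16\pi$ converts these IMCF integrals into Euclidean ones: putting $R_\bullet := \sqrt{A_\bullet/(4\pi)}$ and $G_\bullet(r) := F(2\log(r/R_\bullet))$,
$$
\int_{N_\bullet}|\nabla u|^2\, dv \;\leq\; \int_{|x|>R_\bullet}|\nabla G_\bullet|^2\, dv^\xi, \qquad \int_{N_\bullet} u^6\, dv \;\geq\; \int_{|x|>R_\bullet} G_\bullet^6\, dv^\xi.
$$
Thus the IMCF reduces the Yamabe estimate to one on two Euclidean exteriors of balls, joined by a constant region of volume $|\Om_{p,q}|_{g_{p,q}}$.

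To close the argument I would extend each $G_\bullet$ by the constant $1$ on $\bar B(0,R_\bullet)$, obtaining a radial $\bar G_\bullet$ on $\mR^3$, and then invoke the sharp Euclidean Sobolev inequality $\int|\nabla \bar G_\bullet|^2\geq (\mu(\sigma)/8)(\int\bar G_\bullet^6)^{1/3}$ with equality for Aubin--Talenti bubbles $(\lambda^2+|x|^2)^{-1/2}$. Choosing the profile $F$ so that each $\bar G_\bullet$ is a suitable rescaling of an Aubin--Talenti bubble, and optimizing in the concentration parameter $\lambda$ (using Cauchy--Schwarz in $\sqrt{A_p}+\sqrt{A_q}$ to handle the asymmetric case $A_p\ne A_q$), one obtains exactly
$$
\mu(g)\left(1 + \frac{4}{\sqrt{\pi}}\, \tgp(q)\right)^{1/3} \leq \mu(\sigma).
$$
The constant $\sqrt{\pi}/4$ is natural because $\sqrt{\pi}A^{3/2}/4$ equals the volume of the round three-sphere whose equator has area $A$; the inequality asserts that $|\Om_{p,q}| + \tfrac{\sqrt{\pi}}{4}|\Si_{p,q}|^{3/2}$ plays the role of ``effective geometric volume'' of a round sphere of equator area $|\Si_{p,q}|$.

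The main obstacle is achieving the sharp constant $4/\sqrt{\pi}$. Elementary choices such as the harmonic profile $G_\bullet(r)=R_\bullet/r$ or the exponential $F(t)=e^{-t/2}$ yield the correct qualitative form of the bound but with a strictly weaker numerical constant; sharpness requires identifying the truncated Aubin--Talenti bubble with the correct matching at $|x|=R_\bullet$ and a delicate passage to the limit in the concentration scale, ensuring that the estimate reduces to Aubin's $\mu(g)\leq\mu(\sigma)$ exactly when $\Om_{p,q}=\emptyset$ (as in the Schwarzschild case $g=\sigma$, $q=-p$) and produces the announced improvement otherwise.
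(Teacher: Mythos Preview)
Your architecture is the paper's: run the weak IMCF on the two asymptotically flat components of $S_{p,q}\setminus\overline{\Om_{p,q}}$, take a test function equal to $1$ on $\Om_{p,q}$ and to $F\circ\Phi$ on each end, and bound the Yamabe quotient of $g_{p,q}$. The gap is in the comparison step and in the closing Sobolev argument.

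You invoke Hawking-mass monotonicity together with $H=0$ on $\Si_0$ but then record only $\int_{\Si_t}H^2\leq 16\pi$. The full consequence is stronger: since $m_H(\Si_0)=\sqrt{|\Si_0|/16\pi}>0$, monotonicity gives $\int_{\Si_t}H^2\leq 16\pi(1-e^{-t/2})$, with equality exactly along the IMCF in half-Schwarzschild. Feeding this into Cauchy--Schwarz and H\"older yields a comparison to $(\sch,g_{\sch})$, not to the Euclidean exterior $\{|x|>R_\bullet\}$; your Euclidean inequalities are valid but strictly lossy (the weight $\sqrt{e^t-e^{t/2}}$ replaces $e^{t/2}$ in the gradient term, and $e^{2t}/\sqrt{e^t-e^{t/2}}$ replaces $e^{3t/2}$ in the $L^6$ term), and no choice of profile $F$ can recover that loss, so the sharp constant $4/\sqrt{\pi}$ is out of reach in your framework. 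The closing step also fails on its own terms: the Euclidean Sobolev inequality gives a \emph{lower} bound on $\int|\nabla\bar G_\bullet|^2$, which sits in the numerator you are trying to bound from \emph{above}, and $\bar G_\bullet$ can never be an Aubin--Talenti extremal since it is constant on a ball.

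What the paper does is precisely the Schwarzschild comparison (stated as a separate Lemma), together with the explicit profile $f(t)=(2e^t-e^{t/2})^{-1/2}$; the resulting one-variable integrals are evaluated directly to $3\pi/32$ and $\pi/2$. Your bubble intuition is in fact correct once the right model is used: on half-Schwarzschild $\{r\geq 1\}$ with $m=2$ one has $e^{t/2}=(r+1)^2/(4r)$, and a line of algebra gives $(1+r^{-1})\,f(\Phi^{\sch}(r))=2\sqrt{2}\,(1+r^2)^{-1/2}$, so the conformal pushforward of the paper's test function to $(\mR^3,\xi)$ \emph{is} an Aubin--Talenti bubble --- but this identification only appears after comparing to Schwarzschild, not to flat space. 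The asymmetry between the two ends is then absorbed by the elementary bounds $\sqrt{a_1}+\sqrt{a_2}\leq\sqrt{2}\,\sqrt{a_1+a_2}$ and $a_1^{3/2}+a_2^{3/2}\geq(a_1+a_2)^{3/2}/\sqrt{2}$.
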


\begin{cor}
The function $\tgp$ is bounded on $S^3 \setminus \{ p\}$. 
\end{cor}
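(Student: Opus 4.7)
The corollary is an immediate algebraic consequence of Theorem \ref{theta_leq}, so the plan is essentially to solve the displayed inequality for $\tgp(q)$ and observe that the resulting bound does not depend on $q$.

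More precisely, I would fix the metric $g$ on $S^3$ with $\mu(g)>0$ and fix $p\in S^3$. Then $\mu(g)$ and $\mu(\si)$ are positive constants independent of $q\in S^3\setminus\{p\}$. Applying Theorem \ref{theta_leq} and dividing by $\mu(g)>0$ gives
$$
\left(1+\frac{4}{\sqrt{\pi}}\,\tgp(q)\right)^{1/3}\le \frac{\mu(\si)}{\mu(g)}.
$$
Cubing both sides (which is legitimate since both sides are positive) and rearranging yields the explicit upper bound
$$
\tgp(q)\le \frac{\sqrt{\pi}}{4}\left(\left(\frac{\mu(\si)}{\mu(g)}\right)^{3}-1\right),
$$
whose right-hand side depends only on $g$.

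For the lower bound, I would simply observe that $\tgp(q)\ge 0$ directly from the definition: the numerator $|\Om_{p,q}|_{g_{p,q}}$ is a volume and hence non-negative (it is zero exactly in the limiting case when $\Om_{p,q}$ is empty and $\Si_{p,q}$ is connected), while the denominator $|\Si_{p,q}|_{g_{p,q}}^{3/2}$ is strictly positive since $\Si_{p,q}$ is a non-empty outermost horizon of positive area. Combining the two bounds shows that $\tgp$ is bounded on $S^3\setminus\{p\}$. There is no real obstacle here; the content is entirely carried by Theorem \ref{theta_leq}.
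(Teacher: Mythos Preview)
Your proposal is correct and matches the paper's approach: the paper states the corollary immediately after Theorem \ref{theta_leq} without giving a separate proof, treating it as an immediate consequence of the inequality. Your explicit algebra and the observation that $\tgp(q)\ge 0$ by definition simply spell out what the paper leaves implicit.
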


Note that Theorem \ref{theta_leq} provides an alternative proof of
Property \ref{prop_C} in Proposition \ref{properties}.

In our mind, the main interest of this result, as well as Theorem
\ref{yam_theta} in next Section, is to exhibit how the
$\Theta$-invariant is closely related with the Yamabe constant. Its
proof relies on convexity inequalities combined with Bray and Neves
techniques \cite{bray_neves_04} using the weak inverse mean curvature
flow. The trick here is to apply these techniques on 
$( S_{p,q} \setminus \overline{\Om_{p,q}}, g_{p,q})$ which consists in
two connected components, each of them being an asymptotically flat
manifold.

We begin with a technical lemma.

\begin{lemma}
Let $(N,h)$ be an asymptotically flat manifold whose boundary is the
outermost compact minimal surface. Let $(\sch,g_{\sch})$ be one half of
the spatial Schwarzschild manifold with $m=2$, see 
(\ref{schwarzschild_def}),
whose boundary is the minimal sphere $\{ |x| = 1 \}$. 
Let also $\Phi$ and $\Phi^{\sch}$  be the functions given by Theorem
\ref{mcf} and associated to the weak inverse mean curvature flow on
$(N,h)$ and $(\sch,g_{\sch})$.
 
Finally, denote by $\Si_t\definedas \{\Phi=t\} $ and
$\Si^{\sch}_t \definedas \{ \Phi^{\sch}=t \}$ the level sets of $\Phi$ and
$\Phi^{\sch}$. Then, 
\begin{equation} \label{bnlemma1}
\int_{\Si_t} |\nabla \Phi|_h \, da^h 
\leq 
\sqrt{\frac{|\Si_0|_h}{ |\Si_0^{\sch}|_{g_{\sch}} }}
\int_{\Si_t^{\sch}} |\nabla \Phi^{\sch}|_{g_{\sch}} \, da^{g_{\sch}}
\end{equation}
and 
\begin{equation} \label{bnlemma2}
\int_{\Si_t} \frac{1}{ |\nabla \Phi|_h } \, da^h 
\geq
\left( \frac{|\Si_0|_h }{ |\Si_0^{\sch}|_{g_{\sch}} } \right)^{ 3/2 } 
\int_{\Si_t^{\sch}} \frac{1}{|\nabla \Phi^{\sch}|_{g_{\sch}}} \, da^{g_{\sch}}  
\end{equation}
for almost all $t$. Here if $\Si$ is a compact surface, $|\Si|_h$
denotes its area in the metric $h$. 
\end{lemma}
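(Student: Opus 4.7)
The plan is to exploit two features of the spatial Schwarzschild manifold that make it the extremal case among asymptotically flat $3$-manifolds with minimal boundary: its IMCF level sets are the concentric coordinate spheres, on which the mean curvature $H^{\sch}$ is constant, and its Hawking mass is preserved along the flow (equal to $m=2$, as a direct computation shows). On the Schwarzschild side these two features make the Cauchy-Schwarz steps below equalities, while on $(N,h)$ they are genuine inequalities; dividing the two sides then produces (\ref{bnlemma1}) and (\ref{bnlemma2}).

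First I would compare the total squared mean curvatures. Since both $\Si_0$ and $\Si_0^{\sch}$ are minimal, $m_H(\Si_0) = \sqrt{|\Si_0|_h/(16\pi)}$ and $m_H(\Si_0^{\sch}) = 2$. The Huisken--Ilmanen monotonicity from Theorem \ref{mcf} (applicable because $N$ is diffeomorphic to $\mR^3 \setminus B$, so that $\Si_t$ remains a topological sphere and $\chi(\Si_t) = 2$) gives $m_H(\Si_t) \geq m_H(\Si_0)$, while the Schwarzschild computation gives $m_H(\Si_t^{\sch}) \equiv 2$. Inserting these into the definition of the Hawking mass and using the exponential area growth $|\Si_t|_h = |\Si_0|_h e^t$ together with $|\Si_t^{\sch}|_{g_{\sch}} = 64\pi e^t$, both sides collapse to the same expression and I obtain
\[
\int_{\Si_t} H^2 \, da^h \;\leq\; 16\pi\bigl(1 - e^{-t/2}\bigr) \;=\; \int_{\Si_t^{\sch}} (H^{\sch})^2 \, da^{g_{\sch}}.
\]

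To prove (\ref{bnlemma1}), I apply Cauchy-Schwarz on $(N,h)$, namely $\bigl(\int_{\Si_t} H \, da^h\bigr)^2 \leq |\Si_t|_h \int_{\Si_t} H^2 \, da^h$, while on Schwarzschild the same inequality is an equality since $H^{\sch}$ is constant on each $\Si_t^{\sch}$. Dividing these two relations, combining with the estimate of the previous step, and invoking the identity $|\Si_t|_h/|\Si_t^{\sch}|_{g_{\sch}} = |\Si_0|_h/|\Si_0^{\sch}|_{g_{\sch}}$ together with the identification $|\nabla \Phi|_h = H$ from the second bullet of Theorem \ref{mcf}, yields (\ref{bnlemma1}) after taking square roots.

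For (\ref{bnlemma2}), I apply Cauchy-Schwarz in its reverse form,
\[
|\Si_t|_h^2 \;\leq\; \int_{\Si_t} |\nabla \Phi|_h \, da^h \;\cdot\; \int_{\Si_t} \frac{1}{|\nabla \Phi|_h} \, da^h,
\]
which is legitimate because $|\nabla \Phi|_h > 0$ almost everywhere by the third bullet of Theorem \ref{mcf}, and which is again an equality on Schwarzschild. Solving for the reciprocal integral, substituting (\ref{bnlemma1}) to upper-bound the denominator, and then rewriting $|\Si_t^{\sch}|_{g_{\sch}}^2$ via the corresponding Schwarzschild equality, produces (\ref{bnlemma2}) with the asserted factor $(|\Si_0|_h/|\Si_0^{\sch}|_{g_{\sch}})^{3/2}$. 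The structural observation driving the whole argument is that Schwarzschild saturates every inequality in sight; the only substantive input is Hawking mass monotonicity on $(N,h)$, which is precisely Theorem \ref{mcf}, so I do not expect any step to present a serious obstacle.
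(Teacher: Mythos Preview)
Your proposal is correct and follows essentially the same approach as the paper: Hawking mass monotonicity on $(N,h)$ combined with the exponential area law gives the bound $\int_{\Si_t} H^2 \, da^h \leq 16\pi(1-e^{-t/2})$, Cauchy--Schwarz turns this into a bound on $\int_{\Si_t} H\, da^h$, and the H\"older inequality $|\Si_t|_h^2 \leq \int H \cdot \int H^{-1}$ produces the reciprocal bound; in Schwarzschild every step is an equality because $H^{\sch}$ is constant on each level set and the Hawking mass is conserved, which is exactly how the paper argues. The only cosmetic difference is that the paper computes the common expression $\sqrt{16\pi|\Si_0|_h(e^t-e^{t/2})}$ explicitly before comparing, whereas you phrase everything as ratios; both versions are the Bray--Neves argument.
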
 

The proof of this lemma is entirely contained in the work of Bray and
Neves \cite{bray_neves_04}, but not stated in this way. So, we recall
the proof here. The integrals above are not obviously convergent since  
$\nabla \Phi$ can have zeros, the existence of the integrals is
carefully justified in \cite{bray_neves_04} and we do not recall all
details of these arguments.
\begin{proof}
Let 
$$
m_H(\Si_t)
=
\sqrt{\frac{|\Si_t|_h}{16 \pi}}
\left( 16 \pi - \int_{\Si_t} |\nabla \Phi|_h^2 \, da^h \right)
$$
be the Hawking mass of $\Si_t$. By Theorem \ref{mcf} we have
$$
m_H(\Si_t) \geq m_H(\Si_0) = \sqrt{\frac{|\Si_0|_h}{16 \pi}}.
$$
This gives
$$
\int_{\Si_t} |\nabla \Phi|_h^2 \, da^h 
\leq 
16 \pi \left(1 - e^{ -t/2 } \right)
$$
since $|\Si_t|_h = |\Si_0|_h e^t$. By the Cauchy-Schwarz inequality, 
\begin{equation*}
\begin{split} 
\int_{\Si_t} |\nabla \Phi|_h \, da^h 
&\leq 
\sqrt{|\Si_t|_h} \left(\int_{\Si_t} |\nabla \Phi|_h^2 \, da^h
\right)^{1/2} \\
&\leq 
\sqrt{|\Si_0|_h} e^{ t/2 } \sqrt{16 \pi(1 - e^{ -t/2 }) } 
\end{split}
\end{equation*}
and finally
\begin{equation} \label{nablaphi}
\int_{\Si_t} |\nabla \Phi|_h \, da^h 
\leq 
\sqrt{16 \pi | \Si_0|_h (e^t - e^{ t/2 }) }.
\end{equation} 
Observe that the Hawking mass is constant for the inverse mean
curvature flow on $(\sch,g_{\sch})$ and also that 
$|\nabla \Phi^{\sch}|_{g_{\sch}}$ is constant on the corresponding
$\Si_t^{\sch}$. So the above reasoning is still valid on
$(\sch,g_{\sch})$ but all inequalities become equalities. In other
words we have
$$
\int_{\Si_t^{\sch}} |\nabla \Phi^{\sch}|_{g_{\sch}} \, da^{g_{\sch}} 
= 
\sqrt{16 \pi |\Si_0^{\sch}|_{g_{\sch}} (e^t - e^{ t/2 }) }.
$$
Together with Inequality \eref{nablaphi} we get Inequality
\eref{bnlemma1}.

The H\"older inequality tells us that 
$$
\int_{\Si_t} \frac{1}{ |\nabla \Phi|_h} \, da^h  
\geq 
\frac{|\Si_t|^2}{\int_{\Si_t} |\nabla \Phi|_h \, da^h },
$$
and
$$
\int_{\Si_t^{\sch}} \frac{1}{ |\nabla \Phi^{\sch}|_{g_{\sch}}} \, da^{g_{\sch}} 
= 
\frac{|\Si^{\sch}_t|_{g_{\sch}}^2}
{\int_{\Si^{\sch}_t} |\nabla \Phi^{\sch}|_{g_{\sch}} \, da^{g_{\sch}} }
$$
since $|\nabla \Phi^{\sch}|_{g_{\sch}}$ is constant on $\Si_t^{\sch}$. Using
this together with the observation  
$$
\frac{|\Si_t|_h}{|\Si_t^{\sch}|_h}
= 
\frac{|\Si_0|_{g_{\sch}}}{|\Si_0^{\sch}|_{g_{\sch}}}
$$
we conclude that \eref{bnlemma2} holds.
\end{proof}

With this Lemma we are prepared to prove the Theorem. 

\begin{proof}[Proof of Theorem \ref{theta_leq}]
Using the same notation as in Equation \eref{ompq}, we see that the set
$S_{p,q} \setminus \overline{\Om_{p,q}}$ has exactly two
connected components diffeomorphic to $\mR^3 \setminus B$ (where $B$
is the unit ball of $\mR^3$ and whose boundary is an outermost horizon. 
We denote by $M_1$ and $M_2$ these two connected components (which
are asymptotically flat manifolds in the metric $g_{p,q}$). Let
$\Si_1$ and $\Si_2$ stand for the respective boundaries of $M_1$ and
$M_2$. Note that if $\Om_{p,q} = \emptyset$ then $\Si_1=\Si_2=\Si_{p,q}$ 
(with the notations of the end of Paragraph \ref{definition_th} ) and
if $\Om_{p,q} \neq \emptyset$ then $\Si_1$ and $\Si_2$ are exactly the
two connected components of $\Si_{p,q}$.  
By Theorem \ref{mcf} there are functions $\Phi_1$ and $\Phi_2$ on
$M_1$ and $M_2$ defining the weak inverse mean curvature flow. Denote
the level sets of $\Phi_1$ and $\Phi_2$ by $\Si_t^1$ and $\Si_t^2$. 
Define a test function on $S_{p,q}$ by 
\begin{equation*}
u
\definedas      
\begin{cases}
f \circ \Phi_1  &\text{on $M_1$,} \\
f \circ \Phi_2  &\text{on $M_2$,} \\
1               &\text{on $\Om_{p,q}$,}
\end{cases}
\end{equation*}
where 
$$ 
f(t) \definedas \frac{1}{\sqrt{2e^t - e^{ t/2 }}}.
$$
Let $w \in C^{\infty}(M)$, $w \neq 0$. Let also for $\ep$ small
$\eta_\ep \in C^\infty(M)$, $\eta_\ep \in [0,1]$ be a cut-off function
such that $\eta_\ep \equiv 0$ on $B^g_p(\ep) \cup B^g_q(\ep) )$,
$\eta_\ep \equiv 1$ on $S^3 \setminus (B^g_p(2 \ep) \cup B^g_q(\ep))$,
and $|\nabla \eta_\ep|_g \leq 2/\ep$. One easily computes that 
$$ 
\lim_{\ep \to 0} 
\frac
{\int_{S^3} (8 |\nabla (\eta_\ep w)|_g^2 + \Scal^g (\eta_\ep w)^2) \, dv^g} 
{\left( \int_{S^3} (\eta_\ep w)^6 \, dv^g \right)^{ 1/3 } }
= 
\frac
{\int_{S^3} (8 |\nabla  w|_g^2 + \Scal^g w^2) \, dv^g}
{\left( \int_{S^3} w^6 \, dv^g \right)^{ 1/3 } }.
$$
By the definition of $\mu(g)$, we then get that
$$
\mu(g) 
= 
\inf \frac{\int_{S^3} (8 |\nabla  w|_g^2 + \Scal^g  w^2) \, dv^g}
{\left( \int_{S^3} w^6 \, dv^g \right)^{ 1/3 } }
$$
where the infimum is taken over all smooth non-zero functions 
$w$ which are identically zero in a neighborhood of $p$ and $q$. Since
$\mu$ is conformally invariant and since $C^\infty$ is dense in $C^{\sch}$
we have  
$$
\mu(g) 
= 
\inf
\frac{8 \int_{S_{p,q}} |\nabla  w|_{g_{p,q}}^2 \, dv^{g_{p,q}} }
{\left( \int_{S_{p,q}} w^6 \, dv^{g_{p,q}} \right)^{ 1/3 } }.
$$
where the infimum is taken over all non-zero functions locally
Lipschitz functions $w \in H_1^2(S_{p,q},g_{p,q})$. Here 
$H_1^2(S_{p,q},g_{p,q})$ denotes the set of functions in $L^2$ whose
derivatives are also in $L^2$. It follows from \cite{bray_neves_04}
that $u \in H_1^2(S_{p,q},g_{p,q})$ is locally Lipschitz. This implies  
\begin{equation} \label{y(g)}
\mu(g) 
\leq  
\frac{8 \int_{S_{p,q}} |\nabla u|_{g_{p,q}}^2 \, dv^{g_{p,q}} }
{\left( \int_{S_{p,q}} u^6 \, dv^{g_{p,q}} \right)^{ 1/3 } }.
\end{equation}

We have, 
\begin{equation*} 
\int_{S_{p,q}} |\nabla u|_{g_{p,q}}^2 \, dv^{g_{p,q}}
=  
\int_{M_1} f'(\Phi_1)^2 |\nabla \Phi_1|_{g_{p,q}}^2 \, dv^{g_{p,q}} 
+ 
\int_{M_2} f'(\Phi_2)^2 |\nabla \Phi_2|_{g_{p,q}}^2 \, dv^{g_{p,q}}.
\end{equation*}
Let $(\sch, g_{\sch})$ be one half of the spatial Schwarzschild manifold
and let $\Phi^{\sch}$ be the function associated to the weak inverse mean
curvature flow on $(\sch,g_{\sch})$ and whose level sets will be denoted
by $\Si_t^{\sch}$.  We set $a_0 \definedas|\Si_0^{\sch}|_{g_{\sch}}$
and $a_i \definedas |\Si_0^i|_{g_{p,q}}$ for $i = 1,2$. One can compute that 
\begin{equation} \label{nablavalue}
|\nabla \Phi^{\sch}|_{g_{\sch}}
\equiv 
\sqrt{\frac{16 \pi}{a_0}} \frac{\sqrt{e^t - e^{ t/2 }}}{e^t}
\end{equation}
on $\Si^{\sch}_t$. By the coarea formula, Inequality \eref{bnlemma1}, and
the fact that $|\Si_t^{\sch}|_{g_{\sch}} = a_0 e^t$ we have 
\begin{equation*}
\begin{split}
\int_{M_1} f'(\Phi_1)^2 |\nabla \Phi_1|_{g_{p,q}}^2 \, dv^{g_{p,q}} 
&=
\int_0^{\infty} f'(t)^2 \left(
\int_{\Si_t^1} |\nabla \Phi_1|_{g_{p,q}} \, da^{g_{p,q}}
\right) \, dt\\
&\leq 
\sqrt{\frac{a_1}{a_0}} \int_0^{\infty} f'(t)^2 \left(
\int_{\Si_t^{\sch}} |\nabla \Phi^{\sch}|_{g_{\sch}} \, da^{g_{\sch}} 
\right) \, dt \\
&= 
\sqrt{16 \pi} \sqrt{a_1} I
\end{split}
\end{equation*}
where $I$ is defined as
$$
I \definedas
\int_0^{\infty} f'(t)^2 \sqrt{e^t - e^{ t/2 }} \, dt.
$$
Doing the same on $M_2$ and inserting the value of $I$ which is
computed in Lemma \ref{integral} in Appendix \ref{evalint} we obtain 
\begin{equation} \label{num}
\int_{S_{p,q}} |\nabla u|_{g_{p,q}}^2 \, dv^{g_{p,q}}
= 
\frac{3 \pi^{3/2}}{8} \left( \sqrt{a_1} + \sqrt{a_2} \right).
\end{equation}
Now we write 
$$
\int_{S_{p,q}} u^6 \, dv^{g_{p,q}}
= 
\int_{M_1} f(\Phi_1)^6 \, dv^{g_{p,q}}
+
\int_{M_2} f(\Phi_2)^6 \, dv^{g_{p,q}} 
+ 
|\Om_{p,q}|_{g_{p,q}}.
$$
By the coarea formula, Inequality \eref{bnlemma2}, and Equation
\eref{nablavalue} we get
\begin{equation*}
\begin{split}
\int_{M_1} f(\Phi_1)^6 \, dv^{g_{p,q}} 
&= 
\int_0^{\infty} f(t)^6 
\left(
\int_{\Si^1_t} \frac{1}{|\nabla \Phi_1|_{g_{p,q}}} \, da^{g_{p,q}} 
\right) \, dt \\ 
&\geq 
\left(\frac{a_1}{a_0} \right)^{3/2}
\int_0^{\infty} f(t)^6 \left( 
\int_{\Si^{\sch}_t} \frac{1}{|\nabla \Phi^{\sch}|_{g_{\sch}}} \, da^{g_{\sch}} 
\right) \, dt
\\
&= 
a_1^{3/2} \frac{1}{\sqrt{16\pi}} J
\end{split}
\end{equation*}
where $J$ is defined as
$$
J \definedas
\int_0^{\infty} \frac{f(t)^6 e^{2t}}{(e^t - e^{ t/2 })} \, dt.
$$
Hence, doing the same on $M_2$ and using the value of $J$ from Lemma
\ref{integral} we obtain 
\begin{equation} \label{den}
\int_{S_{p,q}} u^6 \, dv^{g_{p,q}} 
\geq 
\frac{\sqrt{\pi}}{8} ( a_1^{3/2} + a_2^{3/2} ) + |\Om_{p,q}|_{g_{p,q}}.
\end{equation}
Plugging \eref{num} and \eref{den} in \eref{y(g)}, we conclude
\begin{equation*}
\mu(g) 
\leq
8 \frac{\frac{3 \pi^{3/2}}{8} \left( \sqrt{a_1} + \sqrt{a_2} \right)} 
{\left( 
\frac{\sqrt{\pi}}{8} ( a_1^{3/2} + a_2^{3/2} ) + |\Om_{p,q}|_{g_{p,q}}
\right)^{1/3} }.
\end{equation*} 
It follows that 
\begin{equation} \label{yamg}
\mu(g) 
\leq 
\frac{3 \pi^{3/2} A_1 } 
{\left( 
\frac{\sqrt{\pi}}{8} A_2 + \frac{|\Om_{p,q}|_{g_{p,q}}}{(a_1 + a_2)^{ 3/2 }}
\right)^{1/3} }
\end{equation} 
where 
$$
A_1 \definedas \frac{\sqrt{a_1}+\sqrt{a_2}}{\sqrt{a_1 + a_2}}
\quad \text{and} \quad 
A_2 \definedas \frac{ a_1^{3/2} + a_2^{3/2} }{ (a_1 + a_2)^{3/2} }.
$$ 
Elementary arguments show that 
\begin{equation} \label{A12}
A_1 \leq \sqrt2 
\quad \text{and} \quad
A_2 \geq \frac{1}{\sqrt2}.
\end{equation}
%
%From the Cauchy-Schwarz inequality we get
%$$
%\sqrt{a_1}+\sqrt{a_2}
%=
%(1,1) \cdot (\sqrt{a_1}, \sqrt{a_2})
%\leq
%\sqrt{2} \sqrt{a_1 + a_2}
%$$
%and the  fore
%\begin{equation} \label{A1}
%A_1 \leq \sqrt2.
%\end{equation}
%In the same way we have 
%$$
%a_1 + a_2 
%= 
%(1,1) \cdot (a_1, a_2)
%\leq
%2^{1/3} (a_1^{3/2} + a_2^{3/2})^{2/3}
%$$
%from which it follows that 
%\begin{equation} \label{A2} 
%A_2 \geq \frac{1}{\sqrt2}.
%\end{equation}

Note that if $\Om_{p,q} \neq \emptyset$, then  
$a_1+a_2 \leq |\Si_{p,q}|_{g_{p,q}}$. If $\Om_{p,q} = \emptyset$, then
the boundaries of $M_1$ and $M_2$ are exactly
$\Si_{p,q}$ which is a connected component of $\Si_{p,q}$ and
hence $a_1 + a_2 =2  |\Si_{p,q}|_{g_{p,q}}$. In both cases,
$a_1 + a_2 \leq 2 |\Si_{p,q}|_{g_{p,q}}$ and 
\begin{equation} \label{si}
\frac{|\Om_{p,q}|_{g_{p,q}}}{(a_1+a_2)^{ 3/2 }} 
\geq 
\frac{\tgp(q)}{2^{ 3/2 }}
\end{equation}
Plugging \eref{A12} and \eref{si} in \eref{yamg} we get
$$
\mu(g) 
\leq
\frac{3 \pi^{3/2} \sqrt2 } 
{\left( 
\frac{\sqrt{\pi}}{8} \frac{1}{\sqrt2}
+ 
\frac{\tgp(q)}{2^{ 3/2 }}
\right)^{1/3} } \\
=
\frac{6 (2 \pi^2)^{ 2/3 } } 
{\left( 1 + \frac{4}{\sqrt{\pi}} \tgp(q) \right)^{1/3} }, 
$$
and Theorem \ref{theta_leq} follows since 
$\mu(\sigma) = 6 (2 \pi^2)^{ 2/3 }$.  
\end{proof}

%\begin{equation*}
%\begin{split} 
%\mu(g) 
%&\leq
%\frac{3 \pi^{3/2} \sqrt2 } 
%{\left( 
%\frac{\sqrt{\pi}}{8} \frac{1}{\sqrt2}
%+ 
%\frac{\tgp(q)}{2^{ 3/2 }}
%\right)^{1/3} } \\
%&=
%\frac{3 \pi^{3/2} \sqrt2 } 
%{\left( \frac{\sqrt{\pi}}{8\sqrt2} \right)^{1/3} 
%\left(
%1 + \frac{8\sqrt2}{\sqrt{\pi}}\frac{\tgp(q)}{2^{ 3/2 }}
%\right)^{1/3} } \\
%&= 
%\frac{3 \pi^{3/2} \sqrt2 } 
%{\left( \frac{\sqrt{\pi}}{8\sqrt2} \right)^{1/3} 
%\left(
%1 + \frac{4}{\sqrt{\pi}} \tgp(q)
%\right)^{1/3} } \\
%&=
%\frac{6 (2 \pi^2)^{ 2/3 } } 
%{\left( 1 + \frac{4}{\sqrt{\pi}} \tgp(q) \right)^{1/3} }, 
%\end{split}
%\end{equation*}

%%%%%%%%%%%%%%%%%%%%%%%%%%%%%%%%%%%%%%%%%%%%%%%%%%%%%%%%%%%%%%%%%
\section{Metrics with large $\Theta$-invariant} 
\label{metrics_large}
%%%%%%%%%%%%%%%%%%%%%%%%%%%%%%%%%%%%%%%%%%%%%%%%%%%%%%%%%%%%%%%%%

Note that the upper bound of $\tgp(q)$ provided by Theorem
\ref{theta_leq} will tend to infinity if the metric $g$ tends to a
metric with vanishing Yamabe constant. In the following theorem we 
prove that $\tgp(q)$ itself tends to infinity in that situation.

\begin{theorem} \label{yam_theta}
Let $g^\infty$ be a Riemannian metric on $S^3$ and assume that $(g^k)$
is a sequence of Riemannian metrics with positive Yamabe constant
tending to $g^\infty$ in all $C^l$, $l \in \mN$ as $k \to \infty$.  
Let $p,q \in S^3$, $p \neq q$. Then 
$$
\mu(g^\infty) = 0 
\Leftrightarrow 
\lim_{k \to \infty} \Theta_p^{g^k}(q) = \infty.
$$
\end{theorem}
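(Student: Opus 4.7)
The plan is to prove the two implications separately.

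The direction $(\Leftarrow)$ is immediate from Theorem \ref{theta_leq}. That theorem gives
$$
\mu(g^k)\leq \mu(\sigma)\left(1+\tfrac{4}{\sqrt{\pi}}\Theta_p^{g^k}(q)\right)^{-1/3},
$$
so if $\Theta_p^{g^k}(q)\to\infty$ then $\mu(g^k)\to 0$, and the continuity of the Yamabe constant under $C^2$-convergence yields $\mu(g^\infty)=0$.

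For $(\Rightarrow)$, assume $\mu(g^\infty)=0$, so that $\mu(g^k)\to 0$. The argument I would run rests on a spectral analysis of the Yamabe operator. Let $\lambda_k>0$ denote the smallest eigenvalue of $L^{g^k}$ with positive, $L^2$-normalized eigenfunction $\phi^k$. By standard elliptic theory $\lambda_k\to 0$ and $\phi^k\to \phi^\infty$ in $C^l$, where $\phi^\infty>0$ satisfies $L^{g^\infty}\phi^\infty=0$. Splitting off the near-kernel mode of the Green's function yields
$$
G_p^k=\frac{\phi^k(p)}{\lambda_k}\phi^k+H_p^k,
$$
where $H_p^k$ is uniformly $C^l$-bounded on compact subsets of $S^3\setminus\{p\}$ (it is the Green's function of $L^{g^k}$ on the orthogonal complement of $\phi^k$). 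Setting $C_k:=(\phi^k(p)+\phi^k(q))/\lambda_k\to\infty$, one obtains $(G_p^k+G_q^k)/C_k\to \phi^\infty$ in $C^l_{\mathrm{loc}}(S^3\setminus\{p,q\})$.

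Since $\Theta$ is invariant under global rescalings of the blown-up metric (areas scale linearly and volumes to the $3/2$ power), I would work with the rescaled metric $\hat g^k:=C_k^{-4}g_{p,q}^k=((G_p^k+G_q^k)/C_k)^4g^k$. On compact subsets of $S^3\setminus\{p,q\}$, $\hat g^k$ converges in $C^l$ to the scalar-flat metric $\hat g^\infty:=(\phi^\infty)^4 g^\infty$, which extends smoothly across $p$ and $q$ to a metric on all of $S^3$. Near $p$ and $q$, however, $\hat g^k$ retains asymptotically flat ends; the mass formula \eqref{mass_pq} together with the spectral expansion gives $m^{g_{p,q}^k}(E_p)\sim\phi^k(p)\phi^k(q)/\lambda_k$, and since ADM mass scales by $C_k^{-2}$ under the homothety, $m^{\hat g^k}(E_p)\to 0$, and similarly for $E_q$.

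The Penrose inequality, applied to each of the two asymptotically flat components of $S_{p,q}\setminus \overline{\Omega_{p,q}^k}$, yields
$$
|\Sigma_{p,q}^k|_{\hat g^k}\leq 16\pi\bigl(m^{\hat g^k}(E_p)^2+m^{\hat g^k}(E_q)^2\bigr)\to 0.
$$
For the volume, I would fix any compact $K\subset S^3\setminus\{p,q\}$ with nonempty interior. Because $\hat g^k\to \hat g^\infty$ smoothly on $K$, the monotonicity formula for minimal surfaces provides a uniform positive lower bound on the area of any minimal surface meeting $K$; with the vanishing horizon area this rules out intersections, so $K\subset \Omega_{p,q}^k$ eventually, and $|\Omega_{p,q}^k|_{\hat g^k}\geq |K|_{\hat g^k}\to |K|_{\hat g^\infty}>0$. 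Dividing these two estimates and invoking scale-invariance of $\Theta$ yields $\Theta_p^{g^k}(q)\to \infty$. The main technical obstacle is precisely the localization of $\Sigma_{p,q}^k$ near $\{p,q\}$: ruling out a runaway component drifting into the bulk is where one needs the monotonicity/compactness of minimal surfaces in the smoothly converging family $\hat g^k$, combined with the vanishing area coming from Penrose.
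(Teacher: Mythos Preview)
Your $(\Leftarrow)$ direction coincides with the paper's. For $(\Rightarrow)$ you take a genuinely different route: a spectral decomposition of the Green's function, then the Penrose inequality to force $|\Sigma_{p,q}^k|_{\hat g^k}\to 0$, then monotonicity to push the horizon out of any fixed compact set. The paper instead proves a direct $C^1$ expansion of the Green's function (its Lemma~\ref{green_f}) and uses it to build explicit mean-curvature barriers: the geodesic spheres $S_p^k(2/a_k)$ and $S_p^k(1/(2a_k))$ have mean curvature of opposite sign in $g_{p,q}^k$, which traps a minimal sphere near $p$ (and likewise near $q$) and yields the estimates $|\Sigma_{p,q}^k|\leq Ca_k^2$, $|\Omega_{p,q}^k|\geq Ca_k^6$ by hand. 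Your spectral splitting $G_p^k=\lambda_k^{-1}\phi^k(p)\phi^k+H_p^k$ is a clean replacement for Lemma~\ref{green_f}, and invoking Penrose instead of a direct area bound is elegant.

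There is, however, a real gap at the volume step. From Penrose and monotonicity you correctly get $|\Sigma_{p,q}^k|_{\hat g^k}\to 0$ and $\Sigma_{p,q}^k\cap K=\emptyset$, but the conclusion ``$K\subset\Omega_{p,q}^k$'' does not follow. Every component of $\Sigma_{p,q}^k$ is a separating $2$-sphere now confined to $B_p(\epsilon)\cup B_q(\epsilon)$, yet nothing you have written excludes \emph{both} components lying in $B_p(\epsilon)$; in that configuration $K\subset M_2$ and $|\Omega_{p,q}^k|_{\hat g^k}$ can be arbitrarily small. You phrase the obstacle as ``a runaway component drifting into the bulk,'' but that is precisely what monotonicity already kills; the scenario left open is both horizon components clustering near a single puncture.

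To close this you need, for each large $k$, a compact minimal sphere in $B_p(\epsilon)\setminus\{p\}$ and another in $B_q(\epsilon)\setminus\{q\}$: once these exist, the outermost horizon seen from $E_p$ cannot lie in $B_q(\epsilon)$ (else the minimal sphere near $p$ would sit inside the horizon-free exterior region), and symmetrically from $E_q$, which forces one component of $\Sigma_{p,q}^k$ into each ball and hence $K\subset\Omega_{p,q}^k$. This existence statement is exactly what the paper's barrier computation (Appendix~\ref{meancurv}) provides, and your Penrose/monotonicity input does not seem to supply it on its own. So as written the argument is incomplete, and completing it appears to require importing the paper's barrier step or an equivalent local construction near each puncture.
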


This theorem implies in particular that if the metric $g$ is close
enough to a metric of zero Yamabe constant then 
$\tgp \not\equiv 0$. The proof is inspired by an argument of Beig and 
O'Murchadha \cite{beig_omurchadha_91}. The main result of
\cite{beig_omurchadha_91} is that 
$(S_p,g^k_p) \definedas (S^3 \setminus \{ p \}, (G^k_p)^4 g^k)$ 
contains a trapped compact minimal surface if $\mu(g^\infty) = 0$ and 
$k$ is large enough.  

\begin{proof}
First, Theorem \ref{theta_leq} tells us that 
$$ 
\lim_{k \to \infty} \Theta_p^{g^k}(q) = \infty  
\Rightarrow \mu(g^\infty) = 0.
$$
To show the opposite implication we assume that 
$\mu(g^\infty) = 0$. Since $\mu$ and $\tgp$ are conformally invariant,
we can further assume that  
$\Scal^{g^\infty} \equiv 0$ and therefore there exists a sequence
$(\ep_k)$ tending to $0$ such that 
\begin{equation} \label{scal}
\| \Scal^{g^k} \|_{L^\infty} \leq \ep_k.
\end{equation}
Let $\eta \in C^\infty( [0,\infty) )$ be a cut-off function
satisfying $0 \leq \eta \leq 1$, $\eta \equiv 1$ on $[0,\de)$,
$\eta \equiv 0$ on $[2 \de, \infty)$, $\de$ being a fixed small
number. Denote by $G_p^k$ the Green's function for $L^{g^k}$ at a
point $p \in S^3$. Then, by \eref{expan}, there exists a function 
$\alpha_p^k \in C^{\infty}(S^3)$ such that 
$$
G_p^k 
= 
\frac{\eta(r_k)}{r_k} + \alpha_p^k
$$
where we use the notation $r_k = d^{g^k}(p,\cdot)$.  
We start by proving the following result.

\begin{lemma} \label{green_f}
There is a subsequence of $(g^k)$ for which the corresponding
functions $\alpha_p^k$ can be decomposed as 
$$
\alpha_p^k = a_p^k + \beta_p^k
$$
where $(a_p^k)$ is a sequence of real numbers tending to $\infty$ and
where $\beta_p^k \in C^{\infty}$ is a smooth function such that 
$$ 
\int_{S^3} \beta_p^k \, dv^{g^k} = 0
$$
and such that 
\begin{equation} \label{betak}
\|\beta_p^k \|_{C^1}  = o(a_p^k).
\end{equation}
\end{lemma}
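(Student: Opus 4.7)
My approach is to use the \emph{mean decomposition}: set
\[
a_p^k \definedas \frac{1}{\vol(S^3,g^k)} \int_{S^3} \alpha_p^k\, dv^{g^k},
\qquad
\beta_p^k \definedas \alpha_p^k - a_p^k,
\]
so that $\int \beta_p^k\, dv^{g^k} = 0$ is automatic by construction. What remains is to prove (i) $a_p^k \to +\infty$ and (ii) $\|\beta_p^k\|_{C^1} = o(a_p^k)$.

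The guiding identity for (i) will be obtained by pairing the defining equation $L^{g^k} G_p^k = \de_p$ against the constant function $1$ and using self-adjointness of $L^{g^k}$:
\[
1 = \int_{S^3} G_p^k\, \Scal^{g^k}\, dv^{g^k}.
\]
Inserting $G_p^k = \eta(r_k)/r_k + a_p^k + \beta_p^k$ will yield
\[
a_p^k \int_{S^3}\Scal^{g^k}\,dv^{g^k}
=
1 - \int_{S^3}\frac{\eta(r_k)}{r_k}\Scal^{g^k}\,dv^{g^k}
  - \int_{S^3}\beta_p^k\Scal^{g^k}\,dv^{g^k}.
\]
Testing the Yamabe functional against $u \equiv 1$ provides the one-sided bound $\int_{S^3} \Scal^{g^k}\,dv^{g^k} \geq \mu(g^k) \vol(S^3,g^k)^{1/3} > 0$, and this tends to $0$ since $\mu(g^k) \to \mu(g^\infty) = 0$; once the two error integrals on the right are small relative to $1$, $a_p^k$ is forced to $+\infty$.

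To control the $\beta_p^k$-error I would first obtain an a priori $C^1$ bound. Subtracting the constant $a_p^k$ from the elliptic equation $L^{g^k}\alpha_p^k = -h_p^k$, where $h_p^k \definedas L^{g^k}(\eta(r_k)/r_k) - \de_p$ is a ``model'' inhomogeneity whose relevant norms are uniformly bounded thanks to $g^k \to g^\infty$ in every $C^l$, gives
\[
8\De^{g^k}\beta_p^k + \Scal^{g^k}\beta_p^k = -h_p^k - a_p^k\Scal^{g^k}.
\]
Because $\beta_p^k$ has mean zero and the metrics stay in a fixed $C^l$-neighbourhood of $g^\infty$, standard elliptic estimates for $\De^{g^k}$ combined with the Poincar\'e inequality, after absorbing the $\Scal^{g^k}\beta_p^k$ term for $\ep_k$ small, should give
\[
\|\beta_p^k\|_{C^1} \leq C\bigl(1 + a_p^k\ep_k\bigr).
\]
Plugging this back into the identity, the last error integral becomes $O(\ep_k + a_p^k\ep_k^2)$, so that
\[
a_p^k\Bigl(\int_{S^3}\Scal^{g^k}\,dv^{g^k} + O(\ep_k^2)\Bigr) = 1 + O(\ep_k).
\]
After extracting a subsequence so that $\int\Scal^{g^k}\,dv^{g^k}$ dominates $\ep_k^2$ (keeping the bracketed factor positive and tending to $0$), we obtain $a_p^k \to +\infty$ and then $\|\beta_p^k\|_{C^1}/a_p^k \leq C/a_p^k + C\ep_k \to 0$, which is exactly the $o(a_p^k)$ statement.

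The \emph{main obstacle} is the circular nature of the estimates: the $C^1$ control of $\beta_p^k$ needs $a_p^k$, while the identity that pins down the rate of $a_p^k \to \infty$ needs $\beta_p^k$. Threading these two bounds in the right order, and thinning the sequence so that $\int\Scal^{g^k}\,dv^{g^k}$ genuinely dominates the $\ep_k^2$ error, is the delicate part; a secondary technical point is to cope with the mild integrable singularity of $h_p^k$ at $p$, which constrains the Lebesgue exponents in the elliptic estimate but does not damage the final $C^1$-bound, since the singular contribution enters only at order $1$.
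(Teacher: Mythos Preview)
Your architecture matches the paper's: same mean decomposition $a_p^k+\beta_p^k$, same pairing $1=\int_{S^3} G_p^k\,\Scal^{g^k}\,dv^{g^k}$, and the same elliptic/Poincar\'e bootstrap producing $\|\beta_p^k\|_{C^1}\le C(1+\ep_k a_p^k)$. The divergence is in how you force $a_p^k\to\infty$, and there your plan has a genuine gap.

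You feed the $C^1$ bound back into the pairing to obtain
\[
a_p^k\Bigl(\textstyle\int_{S^3}\Scal^{g^k}\,dv^{g^k}+O(\ep_k^2)\Bigr)=1+O(\ep_k),
\]
and then propose to ``extract a subsequence so that $\int\Scal^{g^k}$ dominates $\ep_k^2$''. Nothing in the hypotheses guarantees such a subsequence exists. All you know is $0<\mu(g^k)\vol(S^3,g^k)^{1/3}\le\int\Scal^{g^k}\le C\ep_k$, and $\mu(g^k)$ is allowed to go to zero arbitrarily fast relative to $\ep_k^2$ (for instance like $\ep_k^{10}$). In that regime the bracketed factor need not be positive along any subsequence, and your equation gives no information on $a_p^k$. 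This is exactly the circularity you flagged as the ``main obstacle'', and your proposed resolution does not close it.

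The paper sidesteps the circularity with one extra observation: since $\mu(g^k)>0$, the Green's function $G_p^k$ is \emph{positive}. Then the pairing immediately gives
\[
1=\int_{S^3} G_p^k\,\Scal^{g^k}\,dv^{g^k}
\le \ep_k\int_{S^3} G_p^k\,dv^{g^k}
= \ep_k\Bigl(\int_{S^3}\tfrac{\eta(r_k)}{r_k}\,dv^{g^k}+\int_{S^3}\alpha_p^k\,dv^{g^k}\Bigr).
\]
The first integral on the right is uniformly bounded, so $\int\alpha_p^k\,dv^{g^k}\ge \ep_k^{-1}-C\to\infty$, hence $a_p^k\to\infty$ with no reference to $\beta_p^k$ at all. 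With that in hand, your own estimate $\|\beta_p^k\|_{C^1}\le C(1+\ep_k a_p^k)$ yields $\|\beta_p^k\|_{C^1}/a_p^k\le C/a_p^k+C\ep_k\to 0$, which is \eqref{betak}. In short: replace your subsequence extraction by the positivity of $G_p^k$, and the rest of your outline goes through.
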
 

Here the notation in the last claim means that 
$\|\beta_p^k \|_{C^1} / a_p^k$ tends to zero as $k \to \infty$. In the
following proof $C>0$ stands for a constant which is independent of
$k$ but may change from line to line.

\begin{proof}[Proof of Lemma \ref{green_f}]
First we prove that 
\begin{equation} \label{first_step}
\limsup_{k \to \infty} \int_{S^3} \alpha_p^k \, dv^{g^k} 
= 
\infty.
\end{equation}
From the definition of $G_p^k$ together with \eref{expan} we have
\begin{equation*} 
\begin{split}
1 
&= 
\int_{S^3} G_p^k (L^{g^k} 1) \, dv^{g^k} \\
&\leq 
\| L^{g^k} 1\|_{L^\infty} \int_{S^3} G_p^k \, dv^{g^k} \\
&\leq 
\| \Scal^{g^k} \|_{L^\infty} 
\left( 
\int_{B^{g^k}_p(2 \de)} r_k^{-1} \, dv^{g^k} 
+ 
\int_{S^3} \alpha_p^k \, dv^{g^k} 
\right) .
\end{split}
\end{equation*}
By Lebesgue's Theorem 
$$
\lim_{k \to \infty} \int_{B^{g^k}_p(2 \de)} r_k^{-1} \, dv^{g^k} 
= 
\int_{B^{g^\infty}_p(2 \de)} r_\infty^{-1} \, dv^{g^\infty} 
<
\infty
$$
where $r_\infty = d^{g^\infty}(p,\cdot)$. To get a contradiction we
assume that (\ref{first_step}) does not hold. Then 
$\int_{S^3} \alpha_p^k \, dv^{g^k}$ is bounded as $k$ goes to $\infty$. 
We conclude that 
$$
1 \leq C  \| \Scal^{g^k} \|_{L^\infty}.
$$
By Equation \eref{scal}, the right-hand side of this inequality tends
to zero which is not possible. This proves (\ref{first_step}).

Next we set 
$$
a_p^k 
\definedas 
\int_{S^3} \alpha_p^k \, dv^{g^k} 
\bigg/
\int_{S^3} \, dv^{g^k}  
$$
and $\beta_p^k \definedas \alpha_p^k - a_p^k$ so that 
$$
\int_{S^3} \beta_p^k \, dv^{g^k} = 0.
$$
Since by assumption $g^k \to g^\infty$ in all $C^l$, $l \in \mN$,
the Sobolev inequality
\begin{equation} \label{C1_est}
\| \beta_p^k \|_{C^1} \leq  C \| \beta_p^k \|_{H_2^4} 
\end{equation}
holds with a constant $C$ independent of $k$. Here, for any $s>1$, 
$H_2^s$ is the space of $L^s$ functions whose derivatives of
first and second order belong to $L^s$. Let now $l >1$. By standard
regularity result, see for example \cite[Theorem 2.4]{lee_parker_87},   
\begin{equation} \label{h2l}
\| \beta_p^k \|_{H_2^l} 
\leq 
C \left( 
\| \Delta^{g^k} \beta_p^k \|_{L^l} 
+ \| \beta_p^k \|_{L^l} 
\right).
\end{equation}
Using \eref{scal} we write
\begin{equation*}
\begin{split} 
\| 8 \Delta^{g^k} \beta_p^k \|_{L^l}
&= 
\| L^{g^k} \beta_p^k - \Scal^{g^k} \beta_p^k \|_{L^l}\\
&\leq 
\left( \|L^{g^k} \beta_p^k \|_{L^l} + \ep_k \|\beta_p^k \|_{L^l} \right).
\end{split}
\end{equation*}
From the definition of $\beta_p^k$ together with \eref{scal} we get
\begin{equation*}
\begin{split}
\| L^{g^k} \beta_p^k \|_{L^l} 
&=  
\left\| L^{g^k} G_p^k - L^{g^k} \left(\frac{\eta(r_k)}{r_k} \right)
- L^{g^k} a_p^k \right\|_{L^l}  \\
&\leq 
\left\| \delta_p - \eta(r_k) \Delta^{g^k} r_k^{-1} \right\|_{L^l} 
+ \left\| \frac{\Delta^{g^k} \eta(r_k)}{r_k} \right\|_{L^l} \\
&\quad 
+ 2 \left\| 
g^k(\nabla^{g^k} \eta(r_k), \nabla^{g^k} r_k^{-1} ) 
\right\|_{L^l} 
+ C \ep_k \| \beta_p^k \|_{L^l} +C \ep_k a_p^k.
\end{split}
\end{equation*}
Since the derivatives of $\eta(r_k)$  are  supported in $M
\setminus B^{g^k}_p(\de)$, the second and third terms of the right hand
side in the expression above are bounded by some constant $C>0$
independent of $k$. One can also compute 
\cite[Section 6]{lee_parker_87}
$$
\| \delta_p -\eta(r_k) \Delta^{g^k} r_k^{-1}\|_{L^\infty} \leq C.
$$
Finally, we obtain 
\begin{equation} \label{ll}
\| \Delta^{g^k} \beta_p^k \|_{L^l}  
\leq 
C (1 + \ep_k \|\beta_p^k\|_{H_l} + \ep_k a_p^k).
\end{equation}
In particular, we easily deduce from \eref{h2l} that 
\begin{equation} \label{h2l1}
\| \beta_p^k \|_{H_2^l} 
\leq 
C \left( 
( 1  + \ep_k a_p^k)
+ \| \beta_p^k \|_{L^l} 
\right).
\end{equation}
Let $\la_k$ denote the first eigenvalue of $\Delta^{g^k}$. Since
$\int_M \beta_p^k dv^{g^k} =0$ the Cauchy-Schwarz inequality tells us that 
\begin{equation*}
\begin{split}
\int_M (\beta_p^k)^2 dv^{g^k}  
&\leq 
\frac{1}{\la_k} \int_M |\nabla \beta_p^k |_{g^k}^2 dv^{g^k} \\
&\leq 
\frac{1}{\la_k} \int_M \beta_p^k \Delta^{g^k} \beta_p^k dv^{g^k} \\
&\leq 
\frac{1}{\la_k} 
\left( \int_M (\beta_p^k)^2 dv^{g^k} \right)^{1/2} 
\left( \int_M (\Delta^{g^k} \beta_p^k)^2 dv^{g^k} \right)^{1/2}. 
\end{split}
\end{equation*}
The sequence $(\la_k)$ has a non-zero limit since the metrics $(g^k)$
converges, and hence the sequence $(\la_k^{-1})$ is bounded. Together
with \eref{ll} applied with $l=2$, one gets 
$$
\|\beta_p^k \|_{L^2} \leq C(1 + \ep_k a_k).
$$
Returning to \eref{h2l1}, we obtain 
$$
\|\beta_p^k \|_{H^2_2} \leq C(1 + \ep_k a_k).
$$
In particular, by the Sobolev embedding theorem, we get that 
$$
\|\beta_p^k \|_{L^4} \leq C(1 + \ep_k a_k).
$$
Setting $l=4$ and inserting this inequality in \eref{h2l1}, we get 
$$
\|\beta_p^k \|_{H_2^4} \leq C(1 + \ep_k a_k).
$$
Together with \eref{C1_est} this ends the proof of 
Lemma \ref{green_f}.
\end{proof}
 
Let us return to the proof of Theorem \ref{yam_theta}. We fix
points $p,q \in S^3$, $p \neq q$, and to get a contradiction we
assume that $\Theta_p^{g^k}(q)$ has a bounded subsequence. 
Define $G^k \definedas G_p^k + G_q^k$ so that $g^k_{p,q}= G_k^4 g^k$. 
For $r>0$ small let $S_p^k(r)$ be the sphere defined by $r_k = r$,
where again $r_k = d^{g^k}(p,\cdot)$.  
Using the transformation formula for mean curvature under a conformal
change of the metric (see for example \cite[Equation 1.4]{escobar_92})
one can compute that the mean curvature of $S_p^k(r)$ in the metric
$g^k_{p,q}$ is
$$
H_k 
= 
\frac{1}{G_k^3} 
\left(2 g^k(\nabla^{g^k} r_k, \nabla^{g^k} G_k) 
+ \left( \frac{1}{r} + O(r) \right) G_k \right) 
$$
where the constant involved in the bound of the ordo term is
independent of $k$. Apply Lemma \ref{green_f} to $G_p^k$ and $G_q^k$
and take further subsequences to get the corresponding $a_p^k$ and
$a_q^k$ which tend to infinity. Set $a_k \definedas a_p^k + a_q^k$, by
\eref{betak} we have 
$$
G_k
= 
r_k^{-1} + a_k + o(a_k)
$$ 
near $p$. It follows that the mean curvature of $S_p^k(r)$ satisfies
$$ 
H_k
= 
\frac{1}{G_k^3 r^2} 
\left( - 1 + (a_k+o(a_k)) r + r^2  o(a_k)  \right),
$$
see Appendix \ref{meancurv} for further details. 
In particular, the sphere $S_p^k (2/a_k)$ has positive mean curvature
whereas the sphere $S_p^k(1/2a_k)$ has negative mean curvature when
$k$ is large. By standard existence results, there exists a minimal
$2$-sphere $\Si_p^k$ lying between these two spheres. Doing the same
near $q$, we get the existence of a minimal $2$-sphere
$\Si_q^k$. Clearly, 
$\Si_{p,q} = \tilde{\Si}_{p,q} = \Si_p^k \cup \Si_q^k$.  We also get 
\begin{equation} \label{ompqgeq}
S^3 \setminus 
\left( D^{g^k}_p(2/a_k ) \cup B^{g^k}_q( 2/a_k ) \right) 
\subset \Om_{p,q}. 
\end{equation} 
Since $\Si_p^k$ is minimal and since $G_k \leq  C a_k$ on 
$S^k_p (1/2)$ the area of $\Si_p^k$ satisfies
\begin{equation*}
\begin{split} 
| \Si_p^k |_{g^k_{p,q}} 
&\leq 
| S^k_p ( 1/2a_k ) |_{g^k_{p,q}} \\ 
&= 
\int_{S^k_p ( 1/2a_k ) } \, da^{g^k_{p,q}} \\
&= 
\int_{S^k_p ( 1/2a_k ) } G_k^{4} \, da^{g^k} \\
&\leq 
C a_k^4 \int_{S^k_p ( 1/2a_k ) } \, da^{g^k_{p,q}} \\ 
&\leq C a_k^2.
\end{split}
\end{equation*}
Doing the same for $\Si_q^k$ we get that 
\begin{equation} \label{thtoinfty}
| \Si_{p,q} |_{g^k_{p,q}} \leq C a_k^2.
\end{equation}
Using \eref{ompqgeq}, we have
\begin{equation*} 
\begin{split}
| \Om_{p,q} |_{g^k_{p,q}} 
&\geq 
\int_{S^3 \setminus ( B^{g^k}_p( 2/a_k ) \cup B^{g^k}_q( 2/a_k )) }
\, dv^{g^k_{p,q}} \\
&\geq 
\int_{S^3 \setminus (B^{g^k}_p( 2/a_k ) \cup B^{g^k}_q( 2/a_k )) }
G_k^6 \, dv^g. 
\end{split}
\end{equation*}
Estimate \eref{betak} implies that $G_k \geq C a_k$ on 
$S^3 \setminus (B^{g^k}_p ( 2/a_k ) \cup B^{g^k}_q ( 2/a_k ))$. 
This leads to 
$$
| \Om_{p,q} |_{g^k_{p,q}} 
\geq 
C a_k^6 
\int_{S^3 \setminus (B^{g^k}_p( 2/a_k ) \cup B^{g^k}_q( 2/a_k ))} \, dv^{g^k}
\geq 
C a_k^6.
$$
Together with \eref{thtoinfty}, we get
$$
\Theta^{g^k}_p(q) 
= 
\frac{|\Om_{p,q}|_{g^k_{p,q}}}{|\Si_{p,q}|_{g^k_{p,q}}^{3/2} } 
\geq 
C a_k^3
$$
and hence $\Theta^{g^k}_p(q)$ for the subsequence cannot be
bounded. This proves Theorem \ref{yam_theta}.
\end{proof}

%%%%%%%%%%%%%%%%%%%%%%%%%%%%%%%%%%%%%%%%%%%%%%%%%%%%%%%%%%%%%%%%%
\appendix
\section{Evaluation of integrals} 
\label{evalint}
%%%%%%%%%%%%%%%%%%%%%%%%%%%%%%%%%%%%%%%%%%%%%%%%%%%%%%%%%%%%%%%%%

Here we indicate how to evaluate two definite integrals needed in the
proof of Theorem \ref{theta_leq}. Compare the discussion in
\cite{bray_neves_04}, pages 421-422. 

\begin{lemma} \label{integral}
Let
$$ 
f(t) \definedas \frac{1}{\sqrt{2e^t - e^{t/2}}}.
$$
for $t \in (0, \infty)$ and set 
$$
I 
\definedas
\int_0^{\infty}  f'(t)^2 \sqrt{e^t - e^{ t/2 }} \, dt 
\quad \text{and} \quad 
J
\definedas
\int_0^{\infty} \frac{f(t)^6 e^{2t}}{(e^t - e^{ t/2 })} \, dt.
$$
Then 
$$
I = \frac{3 \pi }{32} 
\quad \text{and} \quad 
J = \frac{\pi}{2}.
$$
\end{lemma}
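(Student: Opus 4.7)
The plan is to introduce a rational substitution that converts both integrands into rational functions of a new variable, and then evaluate by partial fractions together with standard trigonometric integrals. The natural choice is $v \definedas \sqrt{1 - e^{-t/2}}$, which is a diffeomorphism $(0,\infty) \to (0,1)$ with $e^{t/2} = (1-v^2)^{-1}$ and $dt = 4v(1-v^2)^{-1}\, dv$. The motivation is that $\sqrt{e^t - e^{t/2}} = \sqrt{e^t(1 - e^{-t/2})} = v/(1-v^2)$ becomes rational in $v$; likewise one checks $2e^t - e^{t/2} = (1+v^2)/(1-v^2)^2$, $4e^t - e^{t/2} = (3+v^2)/(1-v^2)^2$, and $e^{2t} = (1-v^2)^{-4}$.

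Using $f'(t)^2 = (4e^t - e^{t/2})^2 / [16(2e^t - e^{t/2})^3]$, the substitution collapses $I$ to $\tfrac{1}{4}\int_0^1 v^2(3+v^2)^2(1+v^2)^{-3}\, dv$ and $J$ to $4 \int_0^1 (1-v^2)^2(1+v^2)^{-3}\, dv$ (here I read the factor $e^t - e^{t/2}$ in the stated $J$ as appearing under a square root, which is evidently intended: the literal integrand is non-integrable at $t=0$, while the corrected form is exactly what arises from the co-area computation preceding Lemma \ref{integral}). To evaluate these rational integrals, I would expand each numerator in powers of $1+v^2$: polynomial division yields $v^2(3+v^2)^2 = (1+v^2)^3 + 3(1+v^2)^2 - 4$, so that $v^2(3+v^2)^2(1+v^2)^{-3} = 1 + 3(1+v^2)^{-1} - 4(1+v^2)^{-3}$; and $(1-v^2)^2 = ((1+v^2) - 2)^2 = (1+v^2)^2 - 4(1+v^2) + 4$, so that $(1-v^2)^2(1+v^2)^{-3} = (1+v^2)^{-1} - 4(1+v^2)^{-2} + 4(1+v^2)^{-3}$.

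The problem then reduces to the standard integrals $A_k \definedas \int_0^1 (1+v^2)^{-k}\, dv$ for $k = 1, 2, 3$, which are computed by $v = \tan\theta$ (so that the integrand becomes $\cos^{2k-2}\theta$ on $(0,\pi/4)$) and yield $A_1 = \pi/4$, $A_2 = \pi/8 + 1/4$, $A_3 = 3\pi/32 + 1/4$. Combining gives $I = \tfrac{1}{4}(1 + 3A_1 - 4A_3) = \tfrac{1}{4}\cdot \tfrac{3\pi}{8} = \tfrac{3\pi}{32}$ and $J = 4(A_1 - 4A_2 + 4A_3) = 4 \cdot \tfrac{\pi}{8} = \tfrac{\pi}{2}$, as claimed. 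The computation is essentially algebraic; the only non-routine step is the choice of substitution, which is naturally suggested by the appearance of $\sqrt{e^t - e^{t/2}}$ (equivalently, by $e^{t/2}$ being proportional to the areal radius on a Schwarzschild end). The main risk is simply sign and arithmetic errors in the polynomial identities and partial-fraction bookkeeping.
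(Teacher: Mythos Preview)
Your proof is correct and takes essentially the same route as the paper: your substitution $v = \sqrt{1 - e^{-t/2}}$ is literally the paper's $s = \sqrt{(e^{t/2}-1)/e^{t/2}}$ for $I$ (and the reciprocal of the paper's substitution for $J$), and the resulting rational integrals and partial-fraction decompositions agree. Your observation that the denominator in the stated $J$ should be $\sqrt{e^t - e^{t/2}}$ rather than $e^t - e^{t/2}$ is also correct---the paper's own proof tacitly uses the square-root version, as you note is forced by the coarea computation.
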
 

\begin{proof}
Observe that 
$$
I
= 
\frac{1}{16} \int_0^{\infty} 
\frac{(4 e^{t/2} - 1)^2}{(2e^{ t/2 } - 1)^3} 
\sqrt{\frac{e^{ t/2 }-1}{e^{ t/2 } } } \, dt.
$$
Through the change of variables 
$s = \sqrt{\frac{e^{ t/2 }-1}{e^{ t/2 }} }$,
that is $e^{ t/2 } = \frac{1}{1-s^2}$ and 
$dt = \frac{4sds}{1-s^2}$, we get 
$$
I
= 
\frac{1}{4} \int_0^1 \frac{(3+ s^2)^2s^2}{(s^2 +1)^3} \, ds.
$$
Writing 
$$
\frac{(3 +s^2)^2s^2}{(s^2 +1)^3} 
=
1 +  \frac{3}{s^2 +1} - \frac{4}{(s^2 +1)^3},
$$ 
one gets
$$
I
= 
\frac{1}{4} + \frac{3}{4} \left[ \arctan t\right]_0^1 
- \left[ \frac38 \arctan t + \frac38 \frac{t}{1 + t^2} 
+ \frac14 \frac{t^2}{1+t^2}  \right]_1^{\infty} = \frac{3\pi}{32}.
$$

In the same way, observe that 
$$
J = 
\int_0^{\infty} \frac{1}{(2e^{ t/2 } - 1)^2} 
\sqrt{ \frac{e^{ t/2 }}{e^{ t/2 } - 1} } \, dt.
$$
Using the change of variables 
$s = \sqrt{\frac{e^{ t/2 }}{e^{ t/2 } - 1} }$, that is 
$e^{ t/2 } = \frac{s^2}{s^2 -1}$ and 
$dt = -\frac{4ds}{s(s^2 -1)}$, we get
$$
J = 
\int_1^{\infty} \frac{4(s^2 -1)^2}{(s^2 +1)^3} \, ds.
$$
From 
$$
\frac{4(s^2 -1)^2}{(s^2 +1)^3} 
= 
\frac{4}{s^2 -1} - \frac{16}{(s^2 - 1)^2} +\frac{16}{(s^2 -1)^3},
$$ 
we have
\begin{equation*} 
\begin{split}
J 
&= 
4 \left[ \arctan t\right]_1^{\infty} 
- 16 \left[ \frac12 \arctan t +
 \frac12 \frac{t}{1 + t^2} \right]_1^{\infty}\\
&\quad
+ 16 \left[ \frac38 \arctan t + \frac38 \frac{t}{1 + t^2}  
+\frac14 \frac{t^2}{1+t^2} \right]_1^{\infty}\\
&= 
\frac{\pi}{2}.
\end{split}
\end{equation*}
This proves Lemma \ref{integral}.
\end{proof}

%%%%%%%%%%%%%%%%%%%%%%%%%%%%%%%%%%%%%%%%%%%%%%%%%%%%%%%%%%%%%%%%%
\section{Mean curvature computations} 
\label{meancurv}
%%%%%%%%%%%%%%%%%%%%%%%%%%%%%%%%%%%%%%%%%%%%%%%%%%%%%%%%%%%%%%%%%

In \cite[Equation 1.4]{escobar_92} we find the conformal
transformation formula for mean curvature. If $\tilde{g} = u^4 g$ then
the mean curvatures for $\tilde{g}$ and $g$ are related by
$$
\tilde{h} 
= 
\frac{2}{u^3} 
\left(\frac{\partial}{\partial \eta} + \frac{1}{2} h \right) u.
$$
Here $\frac{\partial}{\partial \eta}$ is the normal outward derivative
with respect to the metric $g$. If $r = d^g(p,\cdot)$ then 
$$
\frac{\partial}{\partial \eta} u
=
g(\nabla^g r, \nabla^g u)
$$
where $\nabla^g$ denotes the gradient and we get
$$
\tilde{h} 
= 
\frac{1}{u^3} 
\left(2 g(\nabla^g r, \nabla^g u) + hu \right) .
$$

If we apply this with our notation $g^k_{p,q} = G_k^4 g^k$ etc, then
we conclude
$$
H_k 
= 
\frac{1}{G_k^3} 
\left(2 g^k(\nabla^{g^k} r_k, \nabla^{g^k} G_k) + H^{g^k} G_k \right) .
$$
In our situation we have that the sphere $S_p^k(r)$ is close to a
round sphere of radius $r$ in flat $\mR^3$ for small $r$, therefore
$$
H^{g^k} = \frac{1}{r} + O(r) ,
$$
where the constant involved in the ordo term is independent of $k$
since $g^k$ tends to $g_{\infty}$.
We get 
$$
H_k 
= 
\frac{1}{G_k^3} 
\left(2 g^k(\nabla^{g^k} r_k, \nabla^{g^k} G_k) 
+ \left( \frac{1}{r} + O(r) \right) G_k \right) .
$$
We now insert the expansion of $G_k$ which we get from \eref{betak}, 
$$
G_k
= 
\frac{1}{r_k} + a_k + o(a_k) .
$$ 
Then
$$
\nabla^{g^k} G_k 
= 
 -\frac{1}{r_k^2} \nabla^{g^k} r_k +o(a_k)
$$
and
\begin{equation*}
\begin{split}
H_k 
&= 
\frac{1}{G_k^3} 
\left(2 g^k(\nabla^{g^k} r_k, \nabla^{g^k} G_k) 
+ \left( \frac{1}{r} + O(r) \right) G_k \right) \\
&=
\frac{1}{G_k^3} 
\left(2 g^k\left(\nabla^{g^k} r_k, 
\ -\frac{1}{r^2} \nabla^{g^k} r_k + o(a_k) \right) 
+ 
\left( \frac{1}{r} + O(r) \right) 
\left( \frac{1}{r} + a_k + o(a_k) \right) 
\right) \\
&=
\frac{1}{G_k^3} 
\left( - \frac{1}{r^2} + \frac{1}{r} (a_k +o(a_k)) + o(a_k) \right) \\ 
&=
\frac{1}{G_k^3 r^2} 
\left( - 1 + (a_k+o(a_k)) r + r^2  o(a_k)  \right),
\end{split}
\end{equation*}
since $g^k(\nabla^{g^k} r_k, \nabla^{g^k} r_k) = 1$.

%%%%%%%%%%%%%%%%%%%%%%%%%%%%%%%%%%%%%%%%%%%%%%%%%%%%%%%%%%%%%%%%%
\bibliographystyle{amsplain}
\bibliography{horizon}

\providecommand{\bysame}{\leavevmode\hbox to3em{\hrulefill}\thinspace}
\providecommand{\MR}{\relax\ifhmode\unskip\space\fi MR }
% \MRhref is called by the amsart/book/proc definition of \MR.
\providecommand{\MRhref}[2]{%
  \href{http://www.ams.org/mathscinet-getitem?mr=#1}{#2}
}
\providecommand{\href}[2]{#2}
\begin{thebibliography}{10}

\bibitem{arnowitt_deser_misner_61}
R.~Arnowitt, S.~Deser, and C.~W. Misner, \emph{Coordinate invariance and energy
  expressions in general relativity.}, Phys. Rev. (2) \textbf{122} (1961),
  997--1006.

\bibitem{bartnik_86}
R.~Bartnik, \emph{The mass of an asymptotically flat manifold}, Comm. Pure
  Appl. Math. \textbf{39} (1986), no.~5, 661--693.

\bibitem{beig_omurchadha_91}
R.~Beig and N.~{\'O}~Murchadha, \emph{Trapped surfaces due to concentration of
  gravitational radiation}, Phys. Rev. Lett. \textbf{66} (1991), no.~19,
  2421--2424.

\bibitem{bray_01}
H.~L. Bray, \emph{Proof of the {R}iemannian {P}enrose inequality using the
  positive mass theorem}, J. Differential Geom. \textbf{59} (2001), no.~2,
  177--267.

\bibitem{bray_neves_04}
H.~L. Bray and A.~Neves, \emph{Classification of prime 3-manifolds with
  {Y}amabe invariant greater than {$\mathbb{RP}\sp 3$}}, Ann. of Math. (2)
  \textbf{159} (2004), no.~1, 407--424.

\bibitem{escobar_92}
J.~F. Escobar, \emph{Conformal deformation of a {R}iemannian metric to a scalar
  flat metric with constant mean curvature on the boundary}, Ann. of Math. (2)
  \textbf{136} (1992), no.~1, 1--50.

\bibitem{huisken_ilmanen_01}
G.~Huisken and T.~Ilmanen, \emph{The inverse mean curvature flow and the
  {R}iemannian {P}enrose inequality}, J. Differential Geom. \textbf{59} (2001),
  no.~3, 353--437.

\bibitem{lee_parker_87}
J.~M. Lee and T.~H. Parker, \emph{The {Y}amabe problem}, Bull. Amer. Math. Soc.
  (N.S.) \textbf{17} (1987), no.~1, 37--91.

\bibitem{meeks_simon_yau_82}
W.~Meeks, L.~Simon, and S.~T. Yau, \emph{Embedded minimal surfaces, exotic
  spheres, and manifolds with positive {R}icci curvature}, Ann. of Math. (2)
  \textbf{116} (1982), no.~3, 621--659.

\bibitem{miao07}
P.~Miao, \emph{A note on existence and non-existence of horizons in some
  asymptotically flat 3-manifolds}, Math. Res. Lett. \textbf{14} (2007), no.~3,
  395--402.

\bibitem{schoen_yau_79}
R.~Schoen and S.~T. Yau, \emph{On the proof of the positive mass conjecture in
  general relativity}, Comm. Math. Phys. \textbf{65} (1979), no.~1, 45--76.

\bibitem{witten_81}
E.~Witten, \emph{A new proof of the positive energy theorem}, Comm. Math. Phys.
  \textbf{80} (1981), no.~3, 381--402.

\bibitem{yan_05}
Y.~Yan, \emph{The existence of horizons in an asymptotically flat 3-manifold},
  Math. Res. Lett. \textbf{12} (2005), no.~2-3, 219--230.

\end{thebibliography}
%%%%%%%%%%%%%%%%%%%%%%%%%%%%%%%%%%%%%%%%%%%%%%%%%%%%%%%%%%%%%%%%%

%%%%%%%%%%%%%%%%%%%%%%%%%%%%%%%%%%%%%%%%%%%%%%%%%%%%%%%%%%%%%%%%%
\end{document}